\newtheorem{theorem}{Theorem}[section]
\newtheorem{lemma}[theorem]{Lemma}
\newtheorem{Corollary}[theorem]{Corollary}
\theoremstyle{definition}
\newtheorem{definition}[theorem]{Definition}
\newtheorem{remark}[theorem]{Remark}
\begin{document}

\date{}
\title[Fourier analysis and expanding phenomena ]
{Fourier analysis and expanding phenomena in finite fields}

\author{ Derrick Hart, Liangpan Li and Chun-Yen Shen}

\address{Department of Mathematics, Rutgers University, Piscataway, NJ 08854, USA}
\email{dnhart@math.rutgers.edu}

\address{Department of Mathematics, Shanghai Jiao Tong University,
Shanghai 200240,  China $\&$ Department of Mathematics, Texas State
University, San Marcos, TX 78666, USA }
 \email{liliangpan@yahoo.com.cn}

\address{Department of Mathematics, Indiana University, Bloomington, IN 47405, USA}
\email{shenc@indiana.edu}

\subjclass[2000]{11B75}

\keywords{sums, products, expanding maps, character sums}

\date{}

\begin{abstract}
In this paper the authors study set expansion in finite fields.
Fourier analytic proofs are given for several results recently
obtained by Solymosi (\cite{So082}), Vu (\cite{Vu08}) and Vinh
(\cite{Vi09})
 using spectral graph theory.
 In addition, several generalizations of these results are given.

In the case that $A$ is a subset of a prime field $\mathbb F_p$ of
size less than $p^{1/2}$ it is shown that $|\{a^2+b:a,b \in A\}|\geq
C |A|^{147/146}$, where $|\cdot|$ denotes the cardinality of the set
and $C$ is an absolute constant.
\end{abstract}

\maketitle


\section{introduction}
Let $\mathbb F$ be a field and $E$ be a finite subset of $\mathbb
F^d$, the $d$-dimensional vector space over $\mathbb F$.  Given a
function $f:\mathbb F^d\to \mathbb F$ define
$$f(E)=\{f(x):x \in E\},$$
the image of $f$ under the subset $E$.
We shall say that $f$ is a $d$-variable expander with
expansion index $\epsilon$ if
$$|f(E)|\geq C_{\epsilon}|E|^{\nicefrac{1}{d}+\epsilon},$$
for every subset $E$
possibly under some general density or structural assumptions on
$E$.

Several classical problems in additive and geometric combinatorics
deal with showing that certain polynomials have the expander
property.  Given a finite subset $E\subset \mathbb R^d$ the Erd\H os
distance problem deals with the case of $\Delta: \mathbb R^d\times
\mathbb R^d \rightarrow \mathbb R$ where $\Delta(x,y)=\|x-y\|$. It
is conjectured that
$$|\Delta(E,E)|\gtrapprox |E|^{2/d},$$
that is $\Delta$ is a $2d$-variable expander with expansion index
$1/2d$. Taking $E$ to be a piece of the integer lattice shows that
one cannot in general do better. (Throughout the paper we will write
$X\lesssim  Y$ to mean $X \leq C Y$ where $C$ is a universal
constant, which may vary from line to line but are always universal.
It is also clear that when the quantities $X,Y$ have $f(A)$ involved
for some polynomial $f$, the implied constant may also depend on the
degree of $f$. In addition, we will write $X\lessapprox Y$ in the
case that for every $\delta>0$ there exists $C_{\delta}>0$ such that
$X\leq C_{\delta}t^{\delta}Y$ where $t$ is a large controlling
parameter.)

The fact that $\Delta$ is an expander goes back to the original 1945 by Erd\H os (\cite{E45}).
 The best results on the problem in two dimensions, due to Katz and Tardos (\cite{KT04}),
  are based on a previous breakthrough by Solymosi and Toth (\cite{SolTo01}).
  For the best known results in higher dimensions see \cite{SV04} and \cite{SV05}.

The so-called sum and product problems may also be rephrased as results about expanders.
Specifically, dealing with the fact that for a given set if one function is non-expanding
then it may imply that another function is an expander.

  Let $E=A\times\dots \times A=A^d$ where $A$ is a subset of the integers and set
$s(x_1,\dots,x_d)=x_1+\dots+x_d$ as well as
$p(x_1,\dots,x_d)=x_1\cdot \cdots \cdot x_d$.  In 1983, Erd\H os and
Szemer\'edi \cite{ESZ83} conjectured that given a set $A \subset
\mathbb Z$ that  either the size of the sumset $s(A,\dots,
A)=A+\dots +A$ or the size of of the productset $p(A,\dots,
A)=A\cdot A\cdot \dots \cdot A$ is essentially as large as possible,
that is
$$\max(|A+A+\dots+A|,|A\cdot A\cdot \dots \cdot A|)\gtrapprox |A|^{d}.$$

 By far the most studied case is that of $d=2$.  In this case  Erd\H os and Szemer\'edi gave the bound
$$\max(|A+A|,|A\cdot A|)\gtrsim |A|^{1+\epsilon},$$
 for a small but positive $\epsilon$.

Explicit bounds on $\epsilon$ where $\epsilon \geq 1/31$ was given
by Nathanson (\cite{Na97})
 and $\epsilon \geq 1/15$ by Ford (\cite{Fo98}).
  A breakthrough by Elekes (\cite{El97}) connected the problem to incidence geometry
  applying the Szem\'eredi-Trotter incidence theorem giving $\epsilon \geq 1/4$.
  This was improved by Solymosi (\cite{So05}) to $\epsilon \geq 3/14-\delta$
  where $\delta \rightarrow 0$ as $|A|\rightarrow \infty$.
  These bounds hold in the more general context of finite subsets of $\mathbb R$.
In this case the best known bound, due to Solymosi (\cite{So08}), is
given by
$$ \max (|A+A|, |A\cdot A|) \gtrapprox {|A|}^{\frac{4}{3}}.$$

With regards to the general conjecture much less is known. However,
Bourgain and Chang (\cite{BoCh04}) showed that if $A$ is a subset of
$\mathbb Z$ then for any $n\in \mathbb N$, there exists $d = d(n)$
such that for $E=A^{d}$ one has
$$\max(|s(E)|,|p(E)|) \gtrsim |A|^n.$$

The sum-product problems have been explored in the context of a
variety of rings. In this paper we will be concerned with subsets of
the finite fields $\mathbb F_q$. In this context the situation
appears to be more complicated due to the fact that one may not rely
on the topological properties of the real numbers. It is known,
however, via ground breaking work in \cite{BKT04} that if $A \subset
{\mathbb F}_p$,
 $p$ a prime, and if $|A| \leq p^{1-\epsilon}$ for some $\epsilon>0$,
 then there exists $\delta>0$ such that
$$ \max (|A+A|, |A\cdot A|) \gtrsim{|A|}^{1+\delta}.$$

This bound was given via combinatorial means and did not yield a
precise relationship between $\delta$ and $\epsilon$. In
\cite{HIS07} the first listed author along with Iosevich and
Solymosi, used Fourier analysis to develop incidence theory between
points and hyperbolas in $\mathbb F_q^2$, the $2$-dimensional vector
space over $\mathbb F_q$. This led to for the first time, a concrete
value of $\delta$, for $|A|>q^{1/2}$.

This bound on $|A|$ is natural in finite fields which are not necessarily prime fields
where subfields of size $q^{1/2}$ give the trivial bound.
Garaev (\cite{G072}) applied a method of Elekes (\cite{El97}) to give the bound
$$\max(|A+A|,|A\cdot A|)\gtrsim \min(|A|^{1/2}q^{1/2},|A|^2q^{-1/2}).$$

Solymosi (\cite{So082}) applied spectral graph theory to give a
similar bound for a general class of functions $f$ of which
polynomials of integer coefficients and degrees greater than one are
members. Let $A,B,C$ be subsets of $\mathbb F_q$. Then
$$\max(|A+B|,|f(A)+C|)\gtrsim \min(|A|^{1/2}q^{1/2},|A||B|^{1/2}|C|^{1/2}q^{-1/2}).$$
Setting $B=f(A)$ and
$C=A$ immediately gives the expander
$$|f(A)+A|\gtrsim  \min(|A|^{1/2}q^{1/2},|A|^2q^{-1/2}).$$
This result is analogous to the work done by Elekes, Nathanson and
Ruzsa (\cite{ENR00}) in the real numbers.

In \cite{Vu08} Vu classified all polynomials $f(x_1,x_2)$ for which
if $|A+A|$ is small then $|f(A,A)|$ is large.  Specifically, it was
shown that if $f$ is a ``non-degenerate" polynomial then
$$\max(|A+A|,|f(A,A)|)\gtrsim \min(|A|^{2/3}q^{1/3},|A|^{3/2}q^{-1/4}).$$

In the case of prime fields and $f(x_1,x_2)=x_1x_2$, Garaev
(\cite{G07}) used combinatorial methods to give non-trivial bounds
for all ranges of $|A|$.  In the case of $|A|\leq p^{1/2}$ then the
best result currently is due to the second listed author
(\cite{Li09}) giving
$$ \max (|A+A|, |A\cdot A|) \gtrsim {|A|}^{13/12},$$
slightly improving the result of the third listed author in \cite{She08}.

In prime fields it follows from the result of Glibichuk and Konyagin
(\cite{GK06}) that for $|A|\leq p^{1/2}$ one has that $|A\cdot
A+A|\gtrsim |A|^{7/6}$. This shows that under these constraints one
has that $f(x_1,x_2,x_3)=x_1x_2+x_3$ is
 a three-variable expander of expansion index $1/18$.
 Bourgain (\cite{Bo05}) answered a question of Widgerson
 giving examples of two-variable expanders.  Specifically, showing that
 $f(x_1,x_2)=x_1(x_1+x_2)$ and $g(x_1,x_2)=x_1(x_2+1)$ are expanders.
 However, Bourgain did not give  explicit expansion indexes.
 In the later case Garaev and the third listed author (\cite{GS09})
 showed that the expansion index could be taken to be $1/210-o(1)$.

Given the nature of an expander it seems that the image of a
sufficiently large set should expand enough to encapsulate most if
not all of a finite field.  However, some expanders exhibit a
stronger version of this property than others.  It seems expanders
fall into one of the following three types.
\begin{definition}
Let $f:\mathbb F_q^d\rightarrow \mathbb F_q$ be a given function.
\begin{itemize}
\item We say that $f$ is a \emph{strong expander} if there exists an
$\epsilon>0$ such that for all $|A|\gtrsim q^{1-\epsilon}$ one has
that $|f(A, \dots, A)|\geq q-k$ for a fixed constant $k$.
\item We say that $f$
is a \emph{moderate expander} if there exists an $\epsilon>0$ such
that for $|A|\gtrsim q^{1-\epsilon}$ one has that $|f(A, \dots,
A)|\gtrsim q$.
\item We say that $f$ is a \emph{weak expander} if there exist an
$\epsilon>0$ and a $\delta < 1$ such that for all $|A|\gtrsim
q^{1-\epsilon}$  one has that $|f(A, \dots, A)|\gtrsim
|A|^{\delta}q^{1-\delta}$.
\end{itemize}
\end{definition}

An interesting question is to determine the minimal number of
variables an expander of a certain type can have. As shown in
\cite{GS082} it is impossible for a two-variable expander to be a
strong expander. It is
unknown as to whether there are moderate expanders which are not
strong expanders for some $\epsilon$.

An example of a strong expander is the function
 $f(x_1,x_2,x_3)=x_1^2+x_1x_2+x_3$.  It was shown by Shkredov
in \cite{IS} that if
$|A|\gtrsim q^{5/6}$ then $f(A,A,A)=\mathbb Z_p$.   Shkredov
also proved that the function
 $f(x_1,x_2)=x_1(x_1+x_2)$ is a moderate expander for $|A|\gtrsim q^{3/4}$.
  A result of Garaev and the third listed author (\cite{GS09}) shows that
  $f(x_1,x_2)=x_1(x_2+1)$ is a weak expander for $|A|\gtrsim q^{2/3}$.

\section{Sum-product estimates}

Let $\mathbb G^d=G_1\times \dots \times G_d$ where $G_i\in \{\mathbb
F_q, \mathbb F_q^*\}$ and the group operation $\odot$ is inherited
from each coordinate group.  Define the Fourier transform of any
given function $f: \mathbb G^d\rightarrow \mathbb{C}$  by
$$\widehat f (\chi)=|\mathbb G^d|^{-1}\sum_{x \in \mathbb G^d}f(x)\overline{\chi}(x),$$
where $\chi=(\chi_1,\dots,\chi_d)$ and $\chi_j$ denotes the additive
or multiplicative character corresponding to $G_j$ and by the the
function $\chi(x)$ we mean $\chi_1(x_1)\cdots \chi_d(x_d)$.

We also
define the convolution of functions $f,g$ by
\[(f\ast g)(x)=\sum_{y \in \mathbb G^d}f(y)g(x\odot y^{-1}),\]
where $y^{-1}$ is the inverse of $y$ in $\mathbb G^d$. Then the
following are easy to verify:
\begin{align}
f(x)&=\sum_{\chi}\chi(x)\widehat f (\chi)\\ \widehat{f\ast
g}(\chi)&=|\mathbb G^d|\widehat f (\chi)\widehat g
(\chi),\\
\sum_{x}f(x)\overline{g(x)}&=|\mathbb{G}^d|\sum_{\chi}\widehat f
(\chi)\overline{\widehat g (\chi)}.
\end{align}

Define (\cite{TV06}) the uniformity norm (or  \emph{Fourier bias})
of $f$ by
$$\|f\|_{u}=\max_{\chi \neq \chi^0}|\widehat{f}(\chi)|,$$
where by $\chi^0$ we mean $(\chi_1^0,\dots,\chi_d^0)$ for $\chi_j^0$
the trivial character of the coordinate group. We first give a
modified version of a lemma of Solymosi (\cite{So082}).
\begin{lemma}\label{lemmabig} Suppose that $X, Y, P \subset \mathbb G^d$. Then
$$\left||\{(x,y)\in X \times Y: x\odot y \in P\}|- |X||Y||P||\mathbb G^d|^{-1}\right| \leq \|X\|_u \sqrt{|Y||P|}|\mathbb G^d|.$$
\end{lemma}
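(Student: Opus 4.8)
The plan is to express the counting quantity $|\{(x,y)\in X\times Y : x\odot y\in P\}|$ as a sum over the group, relate it to a convolution, and then apply Parseval/Plancherel (identity~(3) in the excerpt) to separate the "main term" from an error governed by the Fourier bias $\|X\|_u$. Throughout I will abuse notation by writing $X$, $Y$, $P$ both for the sets and for their indicator functions $\mathbf 1_X$, $\mathbf 1_Y$, $\mathbf 1_P$, as is standard.

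Let me think about how to set this up. The count I want is
\[
N \;=\; \sum_{x,y\in\mathbb G^d} X(x)\,Y(y)\,P(x\odot y).
\]
The first step is to recognize the inner structure as a convolution: fixing the product $z=x\odot y$, I can write $N=\sum_{z}P(z)(X\ast \widetilde Y)(z)$ for a suitable reflection $\widetilde Y(y)=Y(y^{-1})$, or more directly I would pair $P$ against the function $z\mapsto\sum_{x}X(x)Y(x^{-1}\odot z)$, which is exactly $(X\ast Y)(z)$ in the convolution defined in the excerpt. So $N=\sum_z P(z)\,(X\ast Y)(z)$, a correlation of $P$ with $X\ast Y$.

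**Next I would** pass to the Fourier side. By the Parseval-type identity~(3), $N=|\mathbb G^d|\sum_{\chi}\widehat{X\ast Y}(\chi)\,\overline{\widehat P(\chi)}$, and by the convolution identity~(2) this becomes $|\mathbb G^d|^2\sum_{\chi}\widehat X(\chi)\widehat Y(\chi)\overline{\widehat P(\chi)}$. I then split off the trivial character $\chi^0$. Since $\widehat f(\chi^0)=|\mathbb G^d|^{-1}\sum_x f(x)$, the $\chi^0$ term contributes exactly $|\mathbb G^d|^2\cdot |\mathbb G^d|^{-3}|X||Y||P|=|X||Y||P||\mathbb G^d|^{-1}$, which is precisely the main term being subtracted in the statement. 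So the whole content reduces to bounding the off-trivial sum
\[
\Bigl|\,|\mathbb G^d|^2\!\!\sum_{\chi\neq\chi^0}\widehat X(\chi)\widehat Y(\chi)\overline{\widehat P(\chi)}\,\Bigr|.
\]

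**The remaining step, which I expect is the only real work,** is to estimate this error sum. Here I would pull out one factor of $\widehat X(\chi)$ in absolute value, bounding it by $\max_{\chi\neq\chi^0}|\widehat X(\chi)|=\|X\|_u$, and then apply Cauchy--Schwarz to the product $\widehat Y(\chi)\overline{\widehat P(\chi)}$ over the remaining characters:
\[
\sum_{\chi\neq\chi^0}|\widehat Y(\chi)||\widehat P(\chi)|
\;\leq\;\Bigl(\sum_\chi|\widehat Y(\chi)|^2\Bigr)^{1/2}\Bigl(\sum_\chi|\widehat P(\chi)|^2\Bigr)^{1/2}.
\]
By Plancherel (identity~(3) with $f=g$), $\sum_\chi|\widehat Y(\chi)|^2=|\mathbb G^d|^{-1}\sum_y Y(y)^2=|Y||\mathbb G^d|^{-1}$, and likewise $\sum_\chi|\widehat P(\chi)|^2=|P||\mathbb G^d|^{-1}$, using that indicator functions are $0/1$-valued so $Y^2=Y$. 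Substituting, the error is at most $|\mathbb G^d|^2\cdot\|X\|_u\cdot(|Y||\mathbb G^d|^{-1})^{1/2}(|P||\mathbb G^d|^{-1})^{1/2}=\|X\|_u\sqrt{|Y||P|}\,|\mathbb G^d|$, exactly the claimed bound. The one point to be careful about is the order of operations in the Cauchy--Schwarz/extraction step: extracting $\|X\|_u$ first and only then applying Cauchy--Schwarz to the remaining two factors is what yields the clean product $\sqrt{|Y||P|}$ with a single factor of the bias, rather than distributing the bias incorrectly. Extending the Cauchy--Schwarz sums from $\chi\neq\chi^0$ back to all $\chi$ only weakens the inequality and is harmless.
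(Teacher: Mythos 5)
Your proof is correct and follows exactly the same route as the paper's: write the count as $\sum_z (X\ast Y)(z)P(z)$, expand via the Parseval and convolution identities, identify the trivial-character term as the main term $|X||Y||P||\mathbb G^d|^{-1}$, and bound the rest by extracting $\|X\|_u$ and then applying Cauchy--Schwarz with Plancherel. The only difference is expository detail (you make the trivial-character computation and the Plancherel evaluations explicit, which the paper leaves implicit), so there is nothing to change.
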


\begin{proof}
Since
\begin{align*}
|\{(x,y)\in X \times Y: x\odot y \in P\}|&=\sum_{z}(X\ast
Y)(z)P(z)\\
&=|\mathbb G^d|\sum_{\chi}\widehat{X\ast
Y}(\chi)\overline{\widehat{P}(\chi)}\\
&=|\mathbb G^d|^2\sum_{\chi}
\widehat{X}(\chi)\widehat{Y}(\chi)\overline{\widehat{P}(\chi)},\end{align*}
we have
\begin{align*}
\left||\{(x,y)\in X \times Y: x\odot y \in P\}|-|X||Y||P||\mathbb
G^d|^{-1}\right| &\leq |\mathbb G^d|^2\sum_{\chi\neq \chi^0}
|\widehat{X}(
\chi)\widehat{Y}( \chi)\overline{\widehat{P}(\chi)}|\\
&\leq |\mathbb G^d|^2 \|X\|_u\sum_{\chi\neq \chi^0}
|\widehat{Y}(\chi)||\widehat{P}(\chi)|,
\end{align*}
which in turn by Cauchy-Schwarz and Plancherel is $\leq |\mathbb
G^d|\|X\|_u \sqrt{|Y||P|}.$
\end{proof}

We say that a set $F$ is Salem with constant $C$ if
\begin{equation*}
\|F\|_{u}\leq
C\sqrt{|F|}|\mathbb G^d|^{-1}.
\end{equation*}
Let $P=X\odot Y$, where $X$ is a subset of a Salem set $\tilde X$
with constant $C$, then one has that
$$|X||Y|\leq |\{(x,y)\in \widetilde{X} \times Y: x\odot y \in P\}|
\leq|\tilde X||Y||X\odot Y||\mathbb G^d|^{-1}+C\sqrt{|\tilde X||Y||X \odot Y|}.$$
This gives the following theorem.
\begin{theorem}\label{mainbig} Suppose $X \subset \mathbb G^d$ is a subset of a Salem set $\tilde X$ with constant $C$.
Then for any $Y \subset \mathbb G^d$ one has that
$$|X\odot Y|\geq \min(|\mathbb G^d||X||\tilde X|^{-1},\;  C^{-2}|X|^2|Y||\tilde X|^{-1}).$$
\end{theorem}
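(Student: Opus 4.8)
The plan is to read the lower bound on $N:=|X\odot Y|$ straight off the chain of inequalities displayed immediately before the statement, since that chain already packages all of the analytic input. Recalling it with $P=X\odot Y$, it reads
\[
|X||Y|\ \leq\ |\tilde X||Y|\,N\,|\mathbb G^d|^{-1}+C\sqrt{|\tilde X||Y|\,N}\,.
\]
Here the left inequality uses only that $X\subset\tilde X$ and that $P=X\odot Y$, so that every product $x\odot y$ with $(x,y)\in X\times Y$ lies in $P$, giving at least $|X||Y|$ admissible pairs; the right inequality is Lemma \ref{lemmabig} applied to the triple $(\tilde X,Y,P)$, combined with the Salem bound $\|\tilde X\|_u\leq C\sqrt{|\tilde X|}\,|\mathbb G^d|^{-1}$ to turn the error term $\|\tilde X\|_u\sqrt{|Y||P|}\,|\mathbb G^d|$ into $C\sqrt{|\tilde X||Y|\,N}$. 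So before touching the theorem I would make sure this single inequality is firmly in hand.

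With it in place the rest is an elementary dichotomy. Since the left side is dominated by a sum of two nonnegative quantities, at least one of them is at least half of $|X||Y|$. If the first (``main'') term wins, i.e. $|\tilde X||Y|\,N\,|\mathbb G^d|^{-1}\gtrsim |X||Y|$, then dividing through by $|\tilde X||Y|\,|\mathbb G^d|^{-1}$ yields $N\gtrsim |\mathbb G^d||X||\tilde X|^{-1}$. If instead the second (``error'') term wins, i.e. $C\sqrt{|\tilde X||Y|\,N}\gtrsim|X||Y|$, then squaring and cancelling a factor of $|Y|$ gives $N\gtrsim C^{-2}|X|^2|Y||\tilde X|^{-1}$. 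Taking the smaller of the two alternatives produces exactly
\[
|X\odot Y|\ \geq\ \min\!\left(|\mathbb G^d||X||\tilde X|^{-1},\ C^{-2}|X|^2|Y||\tilde X|^{-1}\right),
\]
as claimed.

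I do not expect a genuine obstacle at this stage: all of the Fourier-analytic content has been absorbed into Lemma \ref{lemmabig} and the definition of a Salem set, so the theorem reduces to a balancing argument once the displayed inequality is established. The one point I would verify carefully is the orientation of Lemma \ref{lemmabig}, whose error term carries the uniformity norm of the \emph{first} set; it is therefore essential that the Salem set $\tilde X$, and not $Y$, occupy that slot, otherwise the bound cannot be converted using the hypothesis. I would also note that the dichotomy naturally produces harmless numerical factors (a $1/2$ from splitting the sum and a $1/4$ from squaring); these are absorbed into the implied constants in accordance with the conventions fixed in the introduction, which is why the displayed minimum is stated without them.
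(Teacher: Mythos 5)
Your proof is correct and is essentially the paper's own argument: the paper obtains the theorem directly from the displayed chain of inequalities (Lemma \ref{lemmabig} applied to $(\tilde X, Y, X\odot Y)$ together with the Salem hypothesis on $\tilde X$), and your two-case dichotomy simply makes explicit the elementary step the paper leaves implicit with ``this gives the following theorem.'' Your closing observations --- that $\tilde X$ must occupy the first slot of Lemma \ref{lemmabig}, and that the harmless factors of $1/2$ and $1/4$ are absorbed into the implied constants --- are exactly the right points of care and are consistent with how the paper uses the result (always via $\gtrsim$).
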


\begin{remark}
This theorem can be viewed as a finite field version of the main
theorem in \cite{ENR00} by  Elekes,  Nathanson and Ruzsa, in which
the authors investigated the incidences between points and convex
curves in the real plane, and applied the incidence bound to show
$|S+T|\gtrsim\min(|S||T|,|S|^{3/2}|T|^{1/2})$ for any finite subset
$S $ of a strictly convex curve in $\mathbb{R}^2$, while $T$ is
arbitrary.
\end{remark}

\subsection{Salem Sets}

Let $\mathbb F_q$ be a finite field with characteristic $p,$ and
$Tr:\mathbb F_q\rightarrow \mathbb F_p$ be the absolute trace function. It is
well-known (\cite{LiNi97}) that the function $\widetilde{\chi}$
defined by
$$\widetilde{\chi}(c)=\exp(2\pi i Tr(c)/p), \ \ (c\in \mathbb{F}_q)$$
is a character of the additive group of $\mathbb F_q$, and every
additive character $\chi$ of $\mathbb F_q$ is of the form
$\chi(c)=\widetilde{\chi}(bc)$ for some $b\in \mathbb F_q$. Note
also the group of multiplicative characters of $\mathbb F_q$ is a
cyclic group. Denote by $N(f)$  the number of distinct roots of
$f\in \mathbb F_q[x]$ in its splitting field over $\mathbb F_q$.
Then it is easy to see that $N(f^ig^j)\leq N(f)+N(g)$ for any
$i,j\geq0$.

The classical bound due to Weil as well as its generalization for mixed
character sums may be used to show that certain sets $\tilde X$ defined by polynomials are Salem.
\begin{theorem}[Weil's Bound \cite{LiNi97,Ni02}]\label{Weil}
Let $\chi$ be a  non-trivial additive  character of $\mathbb{F}_q$
and $\psi$ be a  non-trivial multiplicative  character of
$\mathbb{F}_q$ of order $s$.
\begin{enumerate}
\item Suppose that $f\in \mathbb{F}_q[x]$ satisfies
$gcd(deg(f),q)=1$. Then  we have
$$\Big|\sum_{x \in \mathbb F_q} \chi(f(x))\Big|\leq (deg(f)-1)\sqrt{q}.$$
\item Suppose that $g\in \mathbb{F}_q[x]$ is not, up to a nonzero
multiplicative constant, an $s$-th power of a polynomial in $\mathbb
F_q[x]$. Then for any $f\in \mathbb{F}_q[x]$ we have
$$\Big|\sum_{x \in \mathbb F_q} \chi(f(x))\psi(g(x))\Big|\leq (deg(f)+d-1)\sqrt{q},$$
where $d$ is the number of distinct roots of $g$ in its splitting
field over $\mathbb{F}_q$. Particularly, taking $f$ to be some
constant function we get
$$\Big|\sum_{x \in \mathbb F_q} \psi(g(x))\Big|\leq (d-1)\sqrt{q}.$$
\end{enumerate}
\end{theorem}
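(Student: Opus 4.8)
The plan is to reinterpret each character sum as a piece of the count of $\mathbb{F}_q$-rational points on an auxiliary curve and then to invoke the Riemann Hypothesis for curves over finite fields, which is Weil's own theorem. For part (1), write the additive character as $\chi(c)=\widetilde{\chi}(bc)$ and introduce the Artin--Schreier curve $C:\ y^p-y=bf(x)$ over $\mathbb{F}_q$. The elementary orthogonality identity
$$\#\{y\in\mathbb{F}_q: y^p-y=c\}=\sum_{t=0}^{p-1}\widetilde{\chi}(tc),$$
which equals $p$ when $\mathrm{Tr}(c)=0$ and $0$ otherwise, lets me expand the affine point count of $C$ as
$$N_{\mathrm{aff}}=\sum_{x\in\mathbb{F}_q}\#\{y:y^p-y=bf(x)\}=q+\sum_{t=1}^{p-1}\sum_{x\in\mathbb{F}_q}\widetilde{\chi}(tbf(x)).$$
Thus the sum I want to bound is one of the $p-1$ Galois-conjugate inner sums making up $N_{\mathrm{aff}}-q$.

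Next I would account for the behavior at infinity: the hypothesis $\gcd(\deg f,q)=1$ makes $C$ totally and tamely ramified there, contributing a single smooth point, so that $N_{\mathrm{aff}}=N-1$ for the smooth projective model, and the Riemann Hypothesis for curves gives $|N-(q+1)|\le 2g\sqrt{q}$. A Riemann--Hurwitz computation for the degree-$p$ cover $C\to\mathbb{P}^1$, whose only ramification is at infinity under the stated coprimality, yields genus $g=(p-1)(\deg f-1)/2$. By itself this only bounds the \emph{sum} of the $p-1$ conjugate character sums by $(p-1)(\deg f-1)\sqrt{q}$. To isolate a single sum I would use the translation action $y\mapsto y+a$, $a\in\mathbb{F}_p$, of the Galois group of the cover: it splits the space of Frobenius eigenvalues into $p-1$ nontrivial isotypic blocks of equal size $\deg f-1$, each block matched to one character $\widetilde{\chi}(t\,\cdot\,)$, with every eigenvalue of absolute value $\sqrt{q}$. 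Reading off the block attached to our character gives $\bigl|\sum_x\chi(f(x))\bigr|\le(\deg f-1)\sqrt{q}$.

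For part (2) the same scheme applies to the fiber product of the Artin--Schreier cover $y^p-y=bf(x)$ with the Kummer (cyclic degree-$s$) cover $z^s=g(x)$; the hypothesis that $g$ is not, up to a constant, an $s$-th power guarantees this cover is geometrically irreducible, so that the relevant isotypic block is genuinely present and the trivial cohomology vanishes. Here the factor $\psi\circ g$ contributes ramification over each of the $d$ distinct roots of $g$ while the factor $\chi\circ f$ contributes at infinity; carrying out the Riemann--Hurwitz / conductor count for the block indexed by $(\chi,\psi)$ produces exactly $\deg f+d-1$ eigenvalues of modulus $\sqrt{q}$, which is the asserted bound. Specializing $f$ to a constant removes the contribution at infinity coming from $f$ and recovers $|\sum_x\psi(g(x))|\le(d-1)\sqrt{q}$.

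The genuinely hard input is the Riemann Hypothesis for curves, $|N-(q+1)|\le 2g\sqrt{q}$, which I am quoting rather than reproving; everything else is bookkeeping in genus, ramification, and character decomposition. If a self-contained argument were desired, the honest obstacle would be to replace this black box by Stepanov's polynomial method, as streamlined by Bombieri and Schmidt, whose technical core is the construction of an auxiliary one-variable polynomial vanishing to high order at many rational points, forcing an upper bound on the point count that, combined with a matching lower bound over a suitable extension field, yields the square-root cancellation.
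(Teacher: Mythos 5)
The paper never proves this theorem: it is Weil's classical bound, quoted with citations to \cite{LiNi97} and \cite{Ni02} and then used as a black box in Corollary \ref{Salem}. So there is no internal proof to compare against. What you have written is an outline of the standard algebraic-geometric proof, and in outline it is correct: part (1) via the Artin--Schreier cover $y^p-y=bf(x)$, the orthogonality expansion of its affine point count, the genus value $g=(p-1)(\deg f-1)/2$, the Riemann Hypothesis for curves, and a decomposition of the Frobenius eigenvalues to isolate a single character sum; part (2) via the cover (equivalently, the rank-one sheaf) attached to $(\chi\circ f)\cdot(\psi\circ g)$, where the non-$s$-th-power hypothesis on $g$ gives geometric nontriviality and the conductor count produces $\deg f+d-1$ eigenvalues. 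Quoting the Riemann Hypothesis for curves is legitimate here, since that is the irreducible core of any proof of this statement; relative to the paper, your route turns a citation into an actual argument modulo that one input.

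Two inaccuracies should be fixed. First, the Artin--Schreier cover is \emph{wildly} ramified above infinity, not ``totally and tamely'' ramified: the ramification index is $p$, which is divisible by the characteristic. Indeed, if the ramification were tame, Riemann--Hurwitz would give $2g-2=-2p+(p-1)$, i.e.\ negative genus. The hypothesis $\gcd(\deg f,q)=1$ ensures a single point above infinity with Swan conductor $\deg f$ there (different exponent $(p-1)(\deg f+1)$), and it is this wild contribution that yields the genus $(p-1)(\deg f-1)/2$ you state. Second, the step where you ``read off the block'' is asserted rather than argued: that the $\mathbb{Z}/p$-isotypic components of the eigenvalue space have equal size $\deg f-1$, and that the individual sum $\sum_x\chi(f(x))$ equals (minus) the trace of Frobenius on exactly one such component, is not automatic from the group action; the clean justification is the factorization of the zeta function, $Z(C,T)=Z(\mathbb{P}^1,T)\prod_{t=1}^{p-1}L(T,\chi_t)$, together with the theorem that each $L(T,\chi_t)$ is a polynomial of degree $\deg f-1$ whose inverse roots have modulus $\sqrt{q}$. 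This is standard bookkeeping and does not invalidate your plan, but as written the isotypic-decomposition step hides precisely the L-function machinery that makes the single-sum bound, rather than the averaged bound, come out.
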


\begin{Corollary}\label{Salem}
Let $p$ be the characteristic of $\mathbb{F}_q$. Suppose $f,g\in
\mathbb F_q[x]$ with $M\doteq deg(f)+deg(g)< p$ and define
$F=\{(f(x),g(x))\in \mathbb G^2\}$.
\begin{enumerate}
\item Let $G^2=\mathbb F_q\times \mathbb F_q$ and suppose $1\leq deg(f)<
deg(g)$. Or,
\item Let $G^2=\mathbb F_q\times \mathbb F_q^{\ast}$ and suppose $gcd(deg(g),q-1)=1$.
Or,
\item Let $G^2=\mathbb F_q^{\ast}\times \mathbb F_q^{\ast}$. Suppose
$f$ contains some irreducible factors that are not factors of $g$
such that the great common divisor of the powers of these factors in
the canonical factorization of $f$ is 1, and vice versa.
\end{enumerate}
Then $F$ is a Salem set with constant  $M$.
\end{Corollary}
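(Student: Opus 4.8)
The plan is to estimate the uniformity norm $\|F\|_u$ directly from its definition by reducing every nontrivial Fourier coefficient of the indicator of $F$ to a one-variable character sum along the parametrization $x\mapsto(f(x),g(x))$, which is exactly the kind of sum controlled by Theorem~\ref{Weil}. For a nontrivial character $\chi=(\chi_1,\chi_2)$ one has $\widehat F(\chi)=|\mathbb G^2|^{-1}\sum_{(u,v)\in F}\overline{\chi_1(u)\chi_2(v)}$, and when the map $x\mapsto(f(x),g(x))$ is injective this sum is, up to conjugation, precisely $\sum_{x}\chi_1(f(x))\chi_2(g(x))$ with $x$ ranging over the points for which $(f(x),g(x))$ lies in $\mathbb G^2$. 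Since $|F|=q$ in that situation, it suffices to prove the uniform bound $|\sum_x \chi_1(f(x))\chi_2(g(x))|\le M\sqrt q$ for every nontrivial $\chi$; the Salem estimate $\|F\|_u\le M\sqrt{|F|}\,|\mathbb G^2|^{-1}$ then follows at once. Writing $\chi_1=\widetilde\chi(a\,\cdot)$ for the additive slots and $\chi_j=\psi^{i}$ for the multiplicative slots (with $\psi$ a fixed generator of order $q-1$), the problem reduces to verifying the non-degeneracy hypotheses of Weil's bound in each of the three configurations.

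In case~(1) the relevant sum is $\sum_x \widetilde\chi(af(x)+bg(x))$ with $(a,b)\neq(0,0)$. Because $\deg f<\deg g$ the leading term cannot cancel, so $af+bg$ is a nonconstant polynomial of degree at most $M-1<p$; hence $\gcd(\deg(af+bg),q)=1$ and Theorem~\ref{Weil}(1) gives the bound $(M-1)\sqrt q$. In case~(2) I write $\psi^{j}(g(x))=\psi(g(x)^{j})$ and apply Theorem~\ref{Weil}(2) to $\sum_x\widetilde\chi(af(x))\psi(g(x)^{j})$. The hypothesis $\gcd(\deg g,q-1)=1$ is exactly what is needed: if $g^{j}$ were a constant times a $(q-1)$-st power then comparing degrees would force $(q-1)\mid j\deg g$, hence $(q-1)\mid j$ and $\psi^{j}$ trivial, a contradiction; when instead $\psi^{j}$ is trivial but $a\neq0$ one falls back on part~(1) after discarding the $\le\deg g$ terms with $g(x)=0$. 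Using $N(g^{j})=N(g)\le\deg g$ the bound is again $\le(M-1)\sqrt q$. Case~(3) is the delicate one: here the sum is $\sum_x\psi(f(x)^{i}g(x)^{j})$ and I must check that $f^{i}g^{j}$ is not a constant times a $(q-1)$-st power whenever $(i,j)\not\equiv(0,0)$. Comparing exponents at an irreducible factor $\pi$ of $f$ with $\pi\nmid g$ (and $a_\pi$ its exponent in $f$), the exponent of $\pi$ in $f^{i}g^{j}$ is $i\,a_\pi$, so divisibility by $q-1$ for all such $\pi$ forces $(q-1)\mid i\gcd_\pi(a_\pi)=i$ by the gcd-one hypothesis; symmetrically $(q-1)\mid j$, contradicting nontriviality. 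Thus Theorem~\ref{Weil}(2) applies with $N(f^{i}g^{j})\le N(f)+N(g)\le M$, yielding $\le(M-1)\sqrt q$.

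Combining the three cases gives $|\sum_x\chi_1(f(x))\chi_2(g(x))|\le(M-1)\sqrt q\le M\sqrt q$ for every nontrivial $\chi$, which is the claimed Salem bound whenever the parametrization is injective, in particular for the graphs $f(x)=x$, $\deg g\ge 2$ that drive the applications. The main obstacle is to remove injectivity, since for a general parametrization Weil controls the multiplicity-weighted sum $\sum_x\chi_1(f(x))\chi_2(g(x))=\sum_{(u,v)\in F}m(u,v)\chi_1(u)\chi_2(v)$, whereas $\widehat F(\chi)$ sees the unweighted image sum. I expect to resolve this by exploiting that $x\mapsto(f(x),g(x))$ is generically $r$-to-one for a single integer $r$, with only $O(M)$ exceptional fibres each of size $\le\min(\deg f,\deg g)\le M$; the unweighted image sum is then $r^{-1}$ times the full Weil sum up to an $O(M)$ correction, while $|F|=q/r+O(M)$. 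The decisive point is the square-root slack: the weighted bound $r^{-1}M\sqrt q$ measured against $\sqrt{|F|}\approx\sqrt{q/r}$ produces a factor $M/\sqrt r\le M$, so the constant $M$ survives even though the map is not injective. Establishing the fibre structure (constancy of the generic multiplicity and the bound on the number of exceptional fibres) is the one genuinely algebraic ingredient, and is where I expect the bookkeeping to be most delicate.
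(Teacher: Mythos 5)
Your first two paragraphs are, modulo wording, exactly the paper's proof. The paper makes the same reduction, writing $\widehat{F}(\chi_1,\chi_2)=\sum_{x\in\mathbb F_q}\chi_1(f(x))\chi_2(g(x))$, and then checks the hypotheses of Theorem \ref{Weil} by the same three arguments you give: no cancellation of leading terms when $\deg(f)<\deg(g)$; the degree comparison showing that $g^k$ a $(q-1)$-th power would force $(q-1)\mid k\deg(g)$, hence $(q-1)\mid k$, when $\gcd(\deg(g),q-1)=1$; and the exponent computation $(q-1)\mid\gcd(ka_1,\dots,ka_s)=k$ in case (3). The one difference is that the paper performs the identification of $\widehat F$ with the sum over the parameter $x$ silently, i.e.\ it tacitly counts points of $F$ with multiplicity, whereas you correctly isolate injectivity of $x\mapsto(f(x),g(x))$ as the hypothesis under which that identification is legitimate.

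The gap is in your plan for removing injectivity, which rests on a false structural claim: that the map is generically $r$-to-one over $\mathbb F_q$ with only $O(M)$ exceptional fibres. This is true for geometric fibres (over $\overline{\mathbb F}_q$; separability follows from $r\le\deg(f)<p$), but $\widehat F$ sees only rational points, and the number of rational preimages of a rational point of $F$ equals the number of fixed points of Frobenius acting on the geometric fibre. By Chebotarev for function fields this count equidistributes over conjugacy classes of the monodromy group, so it is generically constant only for special covers (such as $x\mapsto x^r$), essentially those whose monodromy acts with a uniform number of fixed points. Concretely, take $f(x)=x^3-3x$ (monodromy $S_3$ for $p>3$) and $g=f^2$: this satisfies the hypotheses of case (1) with $M=9$, yet positive proportions of the points of $F$ have exactly one and exactly three rational preimages, so there is no single $r$ and the set of ``exceptional'' fibres has size $\gg q$, not $O(M)$. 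Your bookkeeping therefore cannot be carried out, and the non-injective case cannot be dismissed, since the applications in Theorem \ref{bigcorollaries} involve arbitrary admissible pairs $(f,g)$, not just graphs $f(x)=x$.

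Two honest ways to close it. (i) Prove the statement for the multiplicity weight $w(u,v)=\#\{x:(f(x),g(x))=(u,v)\}$ rather than for the indicator of the set $F$: your computation already gives $\|w\|_u\le M\sqrt q\,|\mathbb G^2|^{-1}$ with no injectivity assumption, and since $\sum_{u,v}w(u,v)^2\le \min(\deg f,\deg g)\,q$, this weighted version runs through Lemma \ref{lemmabig}, Theorem \ref{mainbig} and all later applications with constants depending only on $M$ (which the paper permits). (ii) If one insists on the image set itself, write $\mathbb 1_F=\sum_{k\ge1}(-1)^{k+1}\binom{N}{k}$, where $N$ is the fibre-counting function, and bound each $\sum_{(u,v)}\binom{N(u,v)}{k}\,\overline{\chi_1(u)\chi_2(v)}$ as a character sum along a fibre-power curve via the Bombieri--Weil bound; this yields the Salem property with some constant $C(M)$, though not obviously with the clean constant $M$ claimed in the statement.
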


\begin{proof}
Case (1): Suppose $(\chi_1,\chi_2)\neq(\chi^0,\chi^0)$. There exist
$b_1,b_2\in \mathbb F_q$, not all $b_i$ equal to zero, such that
\[\chi_1(c)=\widetilde{\chi}(b_1c),\ \ \chi_2(c)=\widetilde{\chi}(b_2c).\]
Thus \[\widehat{F}(\chi_1,\chi_2)=\sum_{x\in \mathbb
F_q}\chi_1(f(x))\chi_2(g(x))=\sum_{x\in \mathbb
F_q}\widetilde{\chi}(b_1f(x)+b_2g(x)).\] Since $(b_1,b_2)\neq(0,0)$
and $1\leq deg(f)<deg(g)$, we have that $b_1f(x)+b_2g(x)$ is a
polynomial of positive degree $\leq \max\{deg(f),deg(g)\}\leq M$. By
Theorem \ref{Weil}(1), $F$ is a Salem set with constant  $M$.

Case (2):  Suppose $(\chi,\psi)\neq(\chi^0,\psi^0)$ and let
$\widetilde{\psi}$ be one of its generator of the group of
multiplicative characters of $\mathbb F_q$. If $\psi=\psi^0$, then
by Theorem \ref{Weil}(2) we are done since $M< p$. Next suppose
$\psi\neq\psi^0$. Thus there exists $1\leq k\leq q-2$ such that
$\psi=(\widetilde{\psi})^k$. Thus
\[\widehat{F}(\chi,\psi)=\sum_{x\in
\mathbb F_q}\chi(f(x))\psi(g(x))=\sum_{x\in \mathbb
F_q}\chi(f(x))\widetilde{\psi}(g^k(x)).\] Since $1\leq k\leq q-2$
and $gcd(deg(g),q-1)=1$, $g^k$ could not be a $(q-1)$-th power of a
polynomial. By Theorem \ref{Weil}(2),  $F$ is a Salem set with
constant  $M$.

 Case (3): By assumption, we may write
 \[f=Q_1^{a_1}Q_2^{a_2}\cdots Q_s^{a_s}P_1^{e_1}P_2^{e_2}\cdots P_n^{e_n},\]
 \[g=R_1^{b_1}R_2^{b_2}\cdots R_t^{b_t}P_1^{f_1}P_2^{f_2}\cdots P_n^{f_n},\]
 where $Q_1,\cdots,Q_s$, $P_1,\cdots, P_n$, $R_1,\cdots, R_t$ all are distinct
 irreducible polynomials, $e_i,f_i\geq0$, $gcd(a_1,\cdots,a_s)=gcd(b_1,\cdots,b_m)=1$.
 Now suppose $(\psi_1,\psi_2)\neq(\psi^0,\psi^0)$.
 Following the notations used in  Case (2), we may write
 $\psi_1=(\widetilde{\psi})^k$, $\psi_2=(\widetilde{\psi})^j$, $0\leq k,j\leq
 q-2$, $k+j>0$. Thus
\[\widehat{F}(\psi_1,\psi_2)=\sum_{x\in
F_q}\psi_1(f(x))\psi_2(g(x))=\sum_{x\in
F_q}\widetilde{\psi}(f(x)^kg(x)^j).\] Suppose $f^kg^j$ could  be a
$(q-1)$-th power of a polynomial. Then for all $i\leq s,m\leq t$ we
have
\[(q-1)|ka_i, \ \ \ (q-1)|jb_m.\]
Thus
\[(q-1)|gcd(ka_1,\cdots,ka_s)=k, \ \ \ (q-1)|gcd(jb_1,\cdots,jb_m)=j,\]
which implies $j=k=0$, a contradiction. Therefore by Theorem
\ref{Weil}(3), $F$ is a Salem set with constant  $M$.
\end{proof}

\subsection{Sum-product estimates}

Let $p$ be the characteristic of $\mathbb{F}_q$ and $f,g\in \mathbb
F_q[x]$. Let $F=\{(f(x),g(x))\in \mathbb G^2\}$. For $A, B, C$
subsets of $\mathbb F_q$ we let $X=\{(f(x),g(x))\in \mathbb G^2: x
\in A \}$, $\tilde X=F$ and $Y =B\times C$.  Then combining Theorem
\ref{mainbig} with Corollary \ref{Salem} gives the following
generalization (at least if ones attention is restricted to
polynomials of integer coefficients) of Solymosi (\cite{So082}).

\begin{theorem}\label{bigcorollaries}
Let $p$ be the characteristic of $\mathbb{F}_q$ and $f,g\in \mathbb
F_q[x]$.
\begin{enumerate}
\item If $1\leq deg(f)<deg(g)< p$ then $$|f(A)+B||g(A)+C|\gtrsim \min(|A|q,|A|^2|B||C|q^{-1}).$$
Particularly, one has
$$|f(A)+g(A)|\gtrsim\min(|A|^{1/2}q^{1/2},|A|^2q^{-1/2}).$$
\item Suppose $gcd(deg(g),q-1)=1$ and $deg(f)\geq1, deg(f)+deg(g)< p$. Then $$|f(A)+B||g(A)C|\gtrsim  \min(|A|q,|A|^2|B||C|q^{-1}).$$
\item  Suppose
$f$ contains some irreducible factors that are not factors of $g$
such that the great common divisor of the powers of these factors in
the canonical factorization of $f$ is 1, and vice versa. Suppose
$deg(f)+deg(g)< p$. Then $$|f(A)B||g(A)C|\gtrsim
\min(|A|q,|A|^2|B||C|q^{-1}).$$ Particularly, one has
$$|f(A)g(A)|\gtrsim\min(|A|^{1/2}q^{1/2},|A|^2q^{-1/2}).$$
\end{enumerate}
\end{theorem}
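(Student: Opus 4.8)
The plan is to combine Theorem \ref{mainbig} with Corollary \ref{Salem} in a completely mechanical way, specializing the sets $X, \tilde X, Y$ exactly as described in the paragraph preceding the statement. For each of the three cases I fix the appropriate group $\mathbb{G}^2$ (namely $\mathbb{F}_q\times\mathbb{F}_q$, $\mathbb{F}_q\times\mathbb{F}_q^{\ast}$, and $\mathbb{F}_q^{\ast}\times\mathbb{F}_q^{\ast}$ respectively), set $\tilde X=F=\{(f(x),g(x)):x\in\mathbb{F}_q\}$, let $X=\{(f(x),g(x)):x\in A\}$, and take $Y=B\times C$. Since the degree hypotheses in each case of the theorem statement match precisely the hypotheses of the corresponding case of Corollary \ref{Salem}, that corollary immediately tells me $\tilde X$ is Salem with constant $M=\deg(f)+\deg(g)$.

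**The main computation** is then to read off what $X\odot Y$ and $|\tilde X|$ are in each case and substitute into the conclusion of Theorem \ref{mainbig}, namely $|X\odot Y|\geq\min\bigl(|\mathbb{G}^2|\,|X|\,|\tilde X|^{-1},\;C^{-2}|X|^2|Y|\,|\tilde X|^{-1}\bigr)$. Here the group operation $\odot$ is coordinatewise, so $X\odot Y$ has first coordinate in $f(A)+B$ or $f(A)B$ (additive in the first factor, multiplicative in the second) and second coordinate in $g(A)+C$ or $g(A)C$, according to which group sits in each slot; thus $|X\odot Y|\le |f(A)+B|\,|g(A)+C|$ in Case (1), and analogously in the other cases. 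I would note $|X|=|A|$ provided $x\mapsto(f(x),g(x))$ is injective on $A$ — which the degree conditions guarantee up to a bounded-to-one map, absorbed into the implied constant — and $|\tilde X|\le q$ while $|\mathbb{G}^2|\approx q^2$. With $C=M$ a constant depending only on the degrees, $C^{-2}$ is absorbed into $\gtrsim$, and the two branches of the minimum become $|A|q$ and $|A|^2|B||C|q^{-1}$, giving the stated product bounds.

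**For the ``particularly'' consequences** I specialize $B=g(A)$, $C=f(A)$ in Case (1) (and $B=C=$ appropriate sets in Case (3)), so that the left side becomes $|f(A)+g(A)|^2$ up to the trivial inclusions $f(A)+g(A)\supseteq$ the relevant sumsets; taking square roots of $\min(|A|q,|A|^4q^{-1})$ — after bounding $|B|,|C|\le|A|$ trivially or more carefully observing $|f(A)+g(A)|\ge\max$ — yields the single-set expander bound $\min(|A|^{1/2}q^{1/2},|A|^2q^{-1/2})$. The one point requiring a little care is arranging the self-referential substitution so the product $|f(A)+B||g(A)+C|$ collapses to a power of $|f(A)+g(A)|$; I would handle this by choosing $B$ and $C$ to be the single-variable images and using $|f(A)+g(A)|\ge |f(A)+B|$-type containments.

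**The only genuine obstacle** is verifying that the coordinate map is essentially injective on $A$ so that $|X|\gtrsim|A|$, rather than collapsing. In Case (1) the condition $\deg(f)<\deg(g)$ ensures $(f(x),g(x))=(f(y),g(y))$ forces $g(x)=g(y)$, bounding the fibers by $\deg(g)$; in the multiplicative cases the factorization hypotheses play the same role. Everything else is a direct substitution into the already-proved Theorem \ref{mainbig}, so I expect the proof to be short.
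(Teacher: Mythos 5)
Your proposal is correct and takes exactly the paper's route: the paper likewise sets $X=\{(f(x),g(x)):x\in A\}$, $\tilde X=F=\{(f(x),g(x)):x\in\mathbb F_q\}$, $Y=B\times C$, and simply reads the three cases off Theorem \ref{mainbig} combined with Corollary \ref{Salem}, offering no further detail (your fiber-counting remark that the map is at most $\deg$-to-one, so $|X|\gtrsim|A|$, is the right way to fill the one gap the paper leaves implicit). One small correction: for the ``particularly'' statements, after setting $B=g(A)$, $C=f(A)$ you need the \emph{lower} bounds $|f(A)|,|g(A)|\gtrsim|A|$ (which follow from the same bounded-fiber observation), not the upper bounds $|B|,|C|\le|A|$ that you call trivial, since $|B||C|$ appears in the lower bound $|A|^2|B||C|q^{-1}$ that must dominate $|A|^4q^{-1}$.
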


\subsection{Vu's non-degenerate polynomials}

We give a generalization of Vu's result (\cite{Vu08}) using Theorem
\ref{mainbig}. Following Vu, a polynomial $P\in
\mathbb{F}_q[x_1,x_2]$ is said to be \emph{degenerate} if it is  of
the form $Q\circ L$ where $Q\in \mathbb{F}_q[x]$ and $L$ is a linear
form in $x_1,x_2$. We first recall the Schwarz-Zipple lemma
(\cite{TV06}) and the Katz theorem in \cite{Ka80}.

\begin{lemma}[Schwarz-Zipple]\label{zipple} Let $f \in \mathbb F[x_1,...,x_n]$ be a non-zero polynomial
with degree $\leq k$. Then
$$|\{x \in \mathbb{F}^n :   f(x)=0\}| \leq k|\mathbb{F}|^{n-1}.$$
\end{lemma}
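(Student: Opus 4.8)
The plan is to prove the bound by induction on the number of variables $n$, classifying the zeros according to whether the top coefficient in one distinguished variable vanishes. I would first dispose of the base case $n=1$: a nonzero single-variable polynomial of degree at most $k$ over a field has at most $k$ roots, which is exactly the claimed bound $k|\mathbb{F}|^{0}$. This step uses only that $\mathbb{F}$ is an integral domain, and I read the statement with $|\mathbb{F}|$ finite (as it will be applied over $\mathbb{F}_q$); a finite-subset version would go through verbatim with $S^n$ in place of $\mathbb{F}^n$.

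For the inductive step, assuming the result for $n-1$ variables, I would regard $f$ as a polynomial in the last variable $x_n$ with coefficients in $\mathbb{F}[x_1,\dots,x_{n-1}]$, writing $f = \sum_{i=0}^{d} g_i(x_1,\dots,x_{n-1})\,x_n^{\,i}$, where $d \le k$ is the degree of $f$ in $x_n$ and $g_d$ is a nonzero polynomial of total degree at most $k-d$. I would then partition the zero set $Z = \{x \in \mathbb{F}^n : f(x)=0\}$ according to whether the leading coefficient $g_d$ vanishes at $(x_1,\dots,x_{n-1})$, and estimate the two contributions separately.

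On the locus where $g_d(x_1,\dots,x_{n-1})=0$, the inductive hypothesis applied to $g_d$ gives at most $(k-d)|\mathbb{F}|^{n-2}$ admissible $(n-1)$-tuples, each of which can be completed by at most $|\mathbb{F}|$ choices of $x_n$, for a total of at most $(k-d)|\mathbb{F}|^{n-1}$. On the complementary locus where $g_d(x_1,\dots,x_{n-1})\ne 0$, each of the at most $|\mathbb{F}|^{n-1}$ tuples makes $f$ a genuinely degree-$d$ polynomial in $x_n$, hence with at most $d$ roots, contributing at most $d|\mathbb{F}|^{n-1}$. Summing yields $(k-d)|\mathbb{F}|^{n-1} + d|\mathbb{F}|^{n-1} = k|\mathbb{F}|^{n-1}$, which closes the induction.

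The step requiring the most care — rather than a genuine obstacle — is the degree bookkeeping: one must select the distinguished variable so that the leading coefficient $g_d$ is nonzero, verify $\deg g_d \le k-d$ so that the inductive hypothesis applies with the correct constant, and confirm that the two case estimates add to exactly $k|\mathbb{F}|^{n-1}$ rather than to something larger. Everything else reduces to the single-variable root count and the trivial bound $|\mathbb{F}|$ on the number of choices for a free coordinate.
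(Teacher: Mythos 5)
Your proof is correct. One thing to note for context: the paper does not prove this lemma at all --- it is recalled as a known result with a citation to Tao--Vu's \emph{Additive Combinatorics}, so there is no in-paper argument to compare against. Your induction on the number of variables, splitting the zero set according to whether the leading coefficient $g_d$ (with respect to the distinguished variable $x_n$) vanishes, is the standard textbook proof of this finite-field form of the Schwartz--Zippel bound, and the degree bookkeeping is right: $\deg g_d \le k-d$, so the two cases contribute at most $(k-d)|\mathbb{F}|^{n-1}$ and $d|\mathbb{F}|^{n-1}$ respectively, summing to exactly $k|\mathbb{F}|^{n-1}$. One small simplification to your final paragraph: no care is actually needed in ``selecting'' the variable so that $g_d \ne 0$ --- for any choice of distinguished variable, if $d$ is defined as the degree of $f$ in that variable, then $g_d$ is nonzero by definition (and the degenerate case $d=0$, where $f$ does not involve $x_n$, is handled within your scheme, since then the second locus contributes nothing). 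Your argument also covers the degenerate cases $k-d=0$ and $d=0$ correctly, so the induction closes with no gaps.
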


\begin{theorem}[Katz]\label{salem2d}
Let $P(x_1,x_2)$ be a polynomial of degree $k$ in $\mathbb F_q^2$
which does not contain a linear factor.  Let $P^{-1}=\{(x,y)\in
\mathbb F_q^2:P(x,y)=0\}$. Then
$$\|P^{-1}\|_u\lesssim k^2q^{-3/2},$$
that is to say $P^{-1}$ is a Salem set with respect to $\mathbb
F_q^2$.
\end{theorem}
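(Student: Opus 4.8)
The plan is to unwind the definition of the uniformity norm and reduce everything to a single additive character sum along the curve. Fix a nontrivial $\chi=(\chi_1,\chi_2)$; writing $\chi_1(x)=\widetilde{\chi}(b_1 x)$ and $\chi_2(y)=\widetilde{\chi}(b_2 y)$ with $(b_1,b_2)\neq(0,0)$ exactly as in the discussion preceding Corollary \ref{Salem}, and recalling $|\mathbb G^2|=q^2$, one has
$$\widehat{P^{-1}}(\chi)=q^{-2}\sum_{(x,y)\in P^{-1}}\widetilde{\chi}(-b_1x-b_2y)=q^{-2}\sum_{(x,y)\in P^{-1}}\psi(\ell(x,y)),$$
where $\psi=\widetilde{\chi}$ is a nontrivial additive character and $\ell(x,y)=-b_1x-b_2y$ is a nonzero linear form. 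Thus it suffices to prove the character-sum bound $|S|\lesssim k^2\sqrt{q}$, where $S=\sum_{(x,y)\in P^{-1}}\psi(\ell(x,y))$; dividing by $q^2$ then yields $\|P^{-1}\|_u\lesssim k^2q^{-3/2}=k^2\sqrt{q}\,|\mathbb G^2|^{-1}$, which is the asserted Salem property (with the typical size $|P^{-1}|\approx q$ absorbed into the constant).

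To bound $S$ I would pass to the absolutely irreducible components of the curve. Factor $P=\prod_j P_j$ into $\mathbb F_q$-irreducible factors; by hypothesis none of the $P_j$ is linear. The rational points of $P^{-1}$ are covered by the sets $V(P_j)(\mathbb F_q)$, and by B\'ezout any two distinct components meet in at most $k^2$ points, so inclusion--exclusion replaces $S$ by $\sum_j \sum_{V(P_j)(\mathbb F_q)}\psi(\ell)$ up to an error of $O(k^2)$ coming from the overlaps. A factor $P_j$ that is $\mathbb F_q$-irreducible but \emph{not} absolutely irreducible carries only $O(k^2)$ rational points—its geometric components are permuted by Frobenius, so the $\mathbb F_q$-points sit on their mutual intersections—and contributes $O(k^2)$ to $S$. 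The remaining factors are absolutely irreducible plane curves of degree $d_j$, and since none is a line, the linear form $\ell$ is non-constant on each of them.

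For each absolutely irreducible component $C=V(P_j)$ the core estimate is the Weil bound for the exponential sum $\sum_{x\in C(\mathbb F_q)}\psi(\ell(x))$, and this is where the real content lies. Realizing the sum through the Artin--Schreier covering of $C$ attached to $\ell$ and invoking the Riemann Hypothesis for curves over finite fields (equivalently, Bombieri's bound for exponential sums along a curve) gives
$$\Big|\sum_{x\in C(\mathbb F_q)}\psi(\ell(x))\Big|\leq \big(2g_C-2+2\deg(\ell_\infty)\big)\sqrt{q}\lesssim d_j^2\sqrt{q},$$
where the genus satisfies $g_C\leq (d_j-1)(d_j-2)/2$ and the polar divisor $\ell_\infty$ of $\ell$ at infinity has degree $\leq d_j$. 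The non-constancy of $\ell$ on $C$—guaranteed by the absence of linear factors—is precisely what keeps the covering geometrically connected and prevents a main term of size $q$. Summing over the components and using $\sum_j d_j\leq k$ gives $\sum_j d_j^2\sqrt{q}\leq k^2\sqrt{q}$, and the $O(k^2)$ errors from the overlaps and the non-absolutely-irreducible factors leave the total at $|S|\lesssim k^2\sqrt{q}$, as required.

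I expect the main obstacle to be exactly this per-component Weil estimate: quantifying the square-root cancellation with an explicit constant in $k$ requires controlling both the genus of $C$ and the ramification of $\ell$ at infinity, and verifying that the no-linear-factor hypothesis rules out the degenerate case in which $\ell$ is constant on some component. This algebraic-geometric input is the reason the statement is credited to Katz \cite{Ka80}; the reductions in the first two paragraphs are elementary bookkeeping by comparison.
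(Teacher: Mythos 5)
The paper offers no proof of this theorem: it is imported from Katz \cite{Ka80} as a black box, so your proposal is supplying an argument where the text has none, and it must be judged on its own. Your two reduction paragraphs are correct and standard: unwinding $\|\cdot\|_u$ with the paper's normalization does reduce the claim to the bound $|S|\lesssim k^2\sqrt{q}$ for $S=\sum_{(x,y)\in P^{-1}}\psi(\ell(x,y))$ with $\ell$ a nonzero linear form and $\psi=\widetilde{\chi}$; the B\'ezout bookkeeping for overlaps of components is fine; and the fact that an $\mathbb F_q$-irreducible but not absolutely irreducible factor carries only $O(k^2)$ rational points is the right way to dispose of those pieces.

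The gap is exactly at the step you call the real content. Bombieri's bound for $\sum_{P\in C(\mathbb F_q)}\psi(\ell(P))$ on an absolutely irreducible curve $C$ requires not merely that $\ell$ be non-constant on $C$, but that $\ell|_C$ not be Artin--Schreier degenerate, i.e.\ not of the form $h^p-h+c$ for a rational function $h$ on $C$ and a constant $c$; non-constancy does \emph{not} imply geometric connectedness of the covering in characteristic $p$. When degeneracy occurs there is no cancellation at all: since $\mathrm{Tr}(h^p-h)=0$, every summand equals $\psi(c)$ and the sum has size $|C(\mathbb F_q)|\approx q$. Concretely, take $C=\{(x,y):y=x^p\}$, which is absolutely irreducible of degree $p$ with no linear factor, and $\ell(x,y)=x-y$, which is non-constant on $C$; then $\ell|_C=-(x^p-x)$ and $\mathrm{Tr}(x^p)=\mathrm{Tr}(x)$ force $\sum_{P\in C}\psi(\ell(P))=q$, which destroys your per-component bound $d_j^2\sqrt{q}$ as soon as $p^2<\sqrt{q}$. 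This example even violates the theorem as stated once $q>p^4$ (it gives $\|P^{-1}\|_u\geq q^{-1}>p^2q^{-3/2}$), so the statement implicitly carries a hypothesis such as $k<p$, consistent with the degree restrictions the paper imposes elsewhere (e.g.\ Corollary \ref{Salem}). The repair your proof needs, and must state: assume $k<p$; then degeneracy is impossible, because the polar divisor of $h^p-h+c$ on the smooth model of $C$ is $p$ times that of $h$, hence has degree divisible by $p$, whereas the polar divisor of $\ell|_C$ is nonzero of degree at most $d_j\leq k<p$. With that lemma inserted your argument closes; without it, the proof fails at precisely the point where you located the difficulty.
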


\begin{theorem}\label{VuGeneralized}
Let $P$ be a non-degenerate polynomial of degree $k$ in $\mathbb
F_q[x_1,x_2]$. Then for any $E,F\subset\mathbb F_q^2$ with $|E|\gg
k^2q$ we have
\[|P(E)|\gtrsim \min\left(\frac{|E|q}{|E+F|},\; \frac{|E||F|^{1/2}}{|E+F|^{1/2}q^{1/2}}\right).\]

\end{theorem}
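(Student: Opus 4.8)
The plan is to bound $|P(E)|$ from below by reducing it to a point--curve incidence count in $\mathbb F_q^2$, where the point set is the sumset $S=E+F$ and the curves are translates of the level sets of $P$. Write $C_t=\{(x_1,x_2):P(x_1,x_2)=t\}$ for the level set at height $t$, so that $E$ is partitioned as $E=\bigcup_{t\in P(E)}(E\cap C_t)$. The engine is Katz's Theorem \ref{salem2d}: whenever $P-t$ has no linear factor, $C_t$ is Salem with $\|C_t\|_u\lesssim k^2q^{-3/2}$, and since translation leaves the modulus of the Fourier coefficients unchanged, every translate $C_t+f$ is Salem with the same constant.

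Before counting I would discard the \emph{bad} heights $t$ for which $P-t$ does contain a linear factor, and here non-degeneracy is what keeps their number small. If $P-t_1$ and $P-t_2$ had linear factors with $t_1\neq t_2$, the two lines cannot meet (at an intersection point $P$ would take both values), so all bad lines arising from distinct heights are mutually parallel; writing $P$ in coordinates adapted to that common direction then shows that more than $k$ such heights would force $P=Q\circ L$, contradicting non-degeneracy. Hence there are at most $k$ bad heights, and by Schwarz--Zippel (Lemma \ref{zipple}) the bad level sets contain at most $k\cdot kq=k^2q$ points of $E$. This is exactly where the hypothesis $|E|\gg k^2q$ enters: after removing them we retain $E'\subset E$ with $|E'|\gtrsim|E|$ lying on \emph{good} (Salem) level curves, and we let $T$ be the corresponding set of good heights, so that $|P(E)|\geq|T|$.

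Now I would double count $I=|\{(s,t,f)\in S\times T\times F:P(s-f)=t\}|$, i.e.\ incidences between $S=E+F$ and the translated curves $C_t+f$. For the lower bound, each pair $(e,f)\in E'\times F$ produces the incidence $(e+f,\,P(e),\,f)$ with $e+f\in S$ and $P(e)\in T$; for fixed $f$ these yield $|E'|$ distinct points, so $I\geq|E'||F|$. For the upper bound the crucial move is to aggregate over $f$ for each fixed $t$ first: the inner sum $\sum_{f\in F}|S\cap(C_t+f)|$ is precisely the quantity controlled by Lemma \ref{lemmabig} with $X=C_t$, $Y=F$, and target set $S$, giving
\[\sum_{f\in F}|S\cap(C_t+f)|\;\lesssim\;\frac{|C_t|\,|F|\,|S|}{q^{2}}+k^2q^{1/2}\sqrt{|F|\,|S|}.\]
Summing over $t\in T$ and using $\sum_{t\in T}|C_t|\leq|T|\,kq$ from Schwarz--Zippel yields $I\lesssim\frac{|T||F||S|}{q}+|T|\,q^{1/2}\sqrt{|F||S|}$.

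Comparing the two bounds gives $|E'||F|\lesssim|T|\big(\tfrac{|F||S|}{q}+q^{1/2}\sqrt{|F||S|}\big)$, and solving for $|T|$ via $\tfrac{1}{a+b}\gtrsim\min(\tfrac1a,\tfrac1b)$ produces $|T|\gtrsim\min\big(\tfrac{|E|q}{|S|},\tfrac{|E||F|^{1/2}}{q^{1/2}|S|^{1/2}}\big)$, which is the claim once we substitute $S=E+F$ and use $|P(E)|\geq|T|$. The hard part will be the aggregation step: summing the single-curve Salem error $k^2q^{1/2}\sqrt{|S|}$ naively over all $|T||F|$ pairs $(t,f)$ loses the decisive factor $|F|^{1/2}$, so one must apply Lemma \ref{lemmabig} to the whole family $\{C_t+f\}_{f\in F}$ at once, where the $\sqrt{|Y||P|}=\sqrt{|F||S|}$ shape of the bias term is built in, rather than curve by curve. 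A secondary technical point is verifying that the bad-height count is genuinely $O(k)$, since the entire reduction rests on that estimate.
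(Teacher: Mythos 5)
Your proof is correct, and it reaches the theorem by a genuinely different bookkeeping than the paper, though the analytic engine is the same. The paper also removes the bad heights (citing Vu's Lemma 5.1, where you sketch your own parallel-lines plus degree-count argument, which is sound and gives the needed $O(k)$ bound), but then it pigeonholes: writing $M=\max_{a\in\Delta^c}|E\cap P^{-1}(a)|$ it has $|P(E)|\gtrsim |E|/M$, and it bounds the single most popular fiber by applying Theorem \ref{mainbig} with $X=E\cap P^{-1}(a)$, $\tilde X=P^{-1}(a)$, $Y=F$, using $X+F\subset E+F$ to get $\min\left(qM,\,M^2|F|/q\right)\lesssim|E+F|$, and solving for $M$. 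You instead lower-bound the number of attained good values $|T|$ directly, by double counting incidences between $S=E+F$ and all the good level curves at once: your per-level-set estimate (Lemma \ref{lemmabig} with $X=C_t$, $Y=F$, target $S$) is exactly the inequality inside the paper's deduction of Theorem \ref{mainbig}, but you sum it over $t\in T$ rather than maximizing over $t$. The two organizations are interchangeable here and yield the same exponents; the paper's ``max fiber'' route is what your count becomes when specialized to the worst single $t$, while your ``sum over fibers'' route avoids the pigeonhole at the cost of the extra (harmless) Schwarz--Zippel input $\sum_{t\in T}|C_t|\leq |T|kq$. One last point: the ``hard part'' you flag at the end is not an outstanding gap. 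Applying Lemma \ref{lemmabig} with $Y=F$ is precisely the family-at-once aggregation you call for --- the $\sqrt{|F||S|}$ error already encodes the sum over $f\in F$ --- and this is also exactly how the paper exploits the lemma, so your argument is complete as written.
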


\begin{proof}

 For each
$a\in\mathbb{F}_q$, let
$$P^{-1}(a)=\{(x_1,x_2)\in\mathbb{F}_q^2:\; P(x_1,x_2)=a\}.$$
By Vu's Lemma 5.1 (\cite{Vu08}), there are at least $q-(k-1)$
elements $a_i$ such that $P-a_i$ does not contain a linear factor.
We call such $a_i$ \emph{good} and form the \emph{bad} elements into
a set $\Delta$. By Lemma \ref{zipple}, for each $z\in\Delta$ one has
$|P^{-1}(z)|\leq kq$. Hence
$\sum_{z\in\Delta}|P^{-1}(z)|\leq(k-1)kq,$ and considering that
$|E|\gg k^2q$ we get
$$\Big|E\backslash\bigcup_{z\in\Delta}P^{-1}(z)\Big|\sim|E|.$$
Therefore,
$$|P(E)|\geq\frac{\displaystyle \Big|E\backslash\bigcup_{z\in\Delta}P^{-1}(z)\Big|}{M}\sim\frac{|E|}{M},$$
where
$$M=\max_{a\in\Delta^c}|E\cap P^{-1}(a)|.$$
Now choose one $a\in\Delta^c$ which achieves the above maximum and
define
$$X=E\cap P^{-1}(a), \quad \tilde X= P^{-1}(a), \quad Y=F.$$
Combining Lemma \ref{zipple}, Theorem \ref{salem2d} with the
deduction of Theorem \ref{mainbig} gives
$$\min\left(qM,\frac{M^2|F|}{q}\right)\lesssim |X+Y|\leq|E+F|.$$
Consequently,
$$M\lesssim \max\left(\frac{|E+F|}{q},\; \sqrt{q}\frac{|E+F|^{1/2}}{|F|^{1/2}}\right),$$
which in turn gives
\[|P(E)|\gtrsim \min\left(\frac{|E|q}{|E+F|},\; \frac{|E||F|^{1/2}}{|E+F|^{1/2}q^{1/2}}\right).\]
\end{proof}

\begin{remark}
Applying Theorem \ref{VuGeneralized} with $E=F=A\times A$ gives Vu's
estimate: $$\max(|A+A|,|P(A,A)|)\gtrsim
\min(|A|^{2/3}q^{1/3},|A|^{3/2}q^{-1/4}).$$

\end{remark}

\subsection{Generalized Erd\H os distance problem}
 In vector spaces over finite fields, one may define
$ \Delta(x,y)$ with
$ \|x-y\|={(x_1-y_1)}^2+\dots+{(x_d-y_d)}^2,$ and one may ask for the smallest
possible size of $\Delta(E,E)$ in terms of the size of $E$.

In this context there are additional difficulties to contend with. First,
$E$ may be the whole vector space, which would result in the rather small size for the distance set
$ |\Delta(E,E)|={|E|}^{1/d}.$
Another consideration is that if $q$ is a prime congruent to $1\ (mod\ 4)$ ,
then there exists an $i \in {\Bbb F}_q$ such that $i^2=-1$. This allows us to construct a set in ${\mathbb F}_q^2,$
$ Z=\{(t,it): t \in {\Bbb F}_q\}$ and one can easily check that
$ \Delta(Z,Z)=\{0\}.$

The first  non-trivial result on the Erd\H os distance problem in vector spaces
over finite fields was proved by Bourgain, Katz and Tao in \cite{BKT04}.
They showed that if $|E| \lesssim q^{2-\epsilon}$ for some $\epsilon>0$ and $q$ is a prime $\equiv 3~ (mod~4)$. Then
$$ |\Delta(E,E)| \gtrsim {|E|}^{\frac{1}{2}+\delta},$$ where $\delta$ is a function of $\epsilon$.

In \cite{IR07} Iosevich and Rudnev gave a distance set result for
general fields in arbitrary dimension with explicit exponents.
 They proved that if $|E| \ge 2q^{(d+1)/2}$, then $\Delta(x,y)$ is a strong expander.
 It may seem reasonable that the exponent $(d+1)/2$ is improvable. However,
 Iosevich, Koh, Rudnev and the first listed author showed in \cite{HIKR07} that
 even for the weaker conclusion that $\Delta(x,y)$ is a moderate expander then the exponent $(d+1)/2$ is sharp in odd dimensions .

In two dimensions  Chapman, Erdo\u{g}an, Iosevich, Koh and the first
listed author (\cite{CEHIK09}) showed that if $E \subset {\Bbb
F}_q^2$ satisfying $|E| \gtrsim  q^{4/3}$ then $\Delta(x,y)$ is a
moderate expander.  Iosevich and Koh (\cite{IK08}) showed that
$\Delta_n(x,y)$ with $ \|x-y\|_n={(x_1-y_1)}^n+{(x_2-y_2)}^n$ is a
strong expander for $|E| \gtrsim  q^{3/2}$. Vu (\cite{Vu08}) gave
the following result via spectral graph theory.
\begin{theorem}
Let $f \in \mathbb F_q[x_1,x_2]$ be a symmetric non-degenerate
polynomial of degree $k$ and define
$g(x_1,x_2,y_1,y_2)=f(x_1-y_1,x_2-y_2)$. Then for all $E\subset
\mathbb F_q^2$ we have
$$|g(E,E)|\gtrsim \min(k^{-1}q,\; k^{-2}|E|q^{-1/2}).$$
\end{theorem}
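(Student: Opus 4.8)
The plan is to reduce the statement to a count of level-curve incidences and to feed the level curves, one at a time, into Lemma~\ref{lemmabig}. First observe that $g(E,E)=f(E-E)$, where $E-E=\{x-y:x,y\in E\}\subset\mathbb F_q^2$ is the difference set; thus $|g(E,E)|$ is precisely the number of distinct values taken by $f$ on $E-E$. For $a\in\mathbb F_q$ write $C_a=\{z\in\mathbb F_q^2:f(z)=a\}$ for the level curve and set
$$n(a)=|\{(x,y)\in E\times E: f(x-y)=a\}|=\sum_{z\in C_a}r_{E-E}(z),$$
where $r_{E-E}(z)=|\{(x,y)\in E^2:x-y=z\}|$. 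Since the curves $C_a$ partition $\mathbb F_q^2$, we have $\sum_a n(a)=|E|^2$, and $g(E,E)=\{a:n(a)>0\}$. Hence, retaining a suitable subset of values with maximum count $M$, a pigeonhole bound gives $|g(E,E)|\ge(\sum n(a))/M$, so everything reduces to an upper bound on $n(a)$.

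The key step is to bound $n(a)$ using the Salem property of $C_a$. I would recast the count in the group $\mathbb G^2=\mathbb F_q\times\mathbb F_q$ (with $\odot=+$, so that $y^{-1}=-y$) as
$$n(a)=|\{(z,w)\in C_a\times E: z+w\in E\}|,$$
the bijection being $(x,y)\mapsto(x-y,y)$. Now apply Lemma~\ref{lemmabig} with $X=C_a$, $Y=E$ and $P=E$. By Katz's Theorem~\ref{salem2d}, whenever $f-a$ has no linear factor the curve $C_a$ is Salem with $\|C_a\|_u\lesssim k^2q^{-3/2}$; by the Schwarz--Zippel Lemma~\ref{zipple}, $|C_a|\le kq$. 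Since $|\mathbb G^2|=q^2$ and $\sqrt{|Y||P|}=|E|$, Lemma~\ref{lemmabig} yields
$$n(a)\le\frac{|C_a||E|^2}{q^2}+\|C_a\|_u|E|q^2\lesssim\frac{k|E|^2}{q}+k^2q^{1/2}|E|.$$
Calling $a$ \emph{good} if $f-a$ has no linear factor, this bound holds for every good $a$.

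Finally I would invoke the good/bad dichotomy. By Vu's Lemma~5.1 (\cite{Vu08}), the non-degeneracy of $f$ forces all but at most $k-1$ values $a$ to be good, so the set $\Delta$ of bad values is tiny. Provided the bad values carry only a small fraction of the total mass, so that $\sum_{a\notin\Delta}n(a)\gtrsim|E|^2$, retaining only good values gives
$$|g(E,E)|\ \ge\ \frac{\sum_{a\notin\Delta}n(a)}{\max_{a\notin\Delta}n(a)}\ \gtrsim\ \frac{|E|^2}{k|E|^2/q+k^2q^{1/2}|E|}\ \gtrsim\ \min\!\left(\frac{q}{k},\ \frac{|E|}{k^2q^{1/2}}\right),$$
the last step being the elementary inequality $(A+B)^{-1}\ge\tfrac12\min(A^{-1},B^{-1})$ applied to the two denominator terms. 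This is exactly the claimed estimate.

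The main obstacle is precisely the control of the bad values. Each bad $a$ corresponds to a line on which $f$ is constant; there $C_a$ is not Salem and $n(a)$ can be as large as $|C_a||E|\lesssim kq|E|$, so summing over $\Delta$ gives only $\sum_{a\in\Delta}n(a)\lesssim k^2q|E|$, which is smaller than $|E|^2$ just once $|E|\gtrsim k^2q$. For $|E|$ below this threshold one must instead argue that the asserted lower bound is itself small, and the $\min$ in the conclusion is exactly what renders this harmless; it is here that the non-degeneracy hypothesis on $f$ does the real work of preventing $E-E$ from collapsing into the bad locus. Making this intermediate range rigorous, rather than the Fourier estimate itself, is where I expect the genuine difficulty to lie.
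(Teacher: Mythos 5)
Your Fourier-analytic core is sound and is essentially the paper's own argument: the proof of Theorem \ref{erdos}, direction (1)$\Rightarrow$(2), performs the same reduction to level sets $f_b=\{z:f(z)=b\}$, applies Lemma \ref{lemmabig} with the Salem factor pulled out, bounds $|f_b|\leq kq$ by Lemma \ref{zipple} and $\|f_b\|_u\lesssim k^2q^{-3/2}$ by Theorem \ref{salem2d}, and finishes by pigeonhole; your instantiation $X=C_a$, $Y=P=E$ is a correct (indeed literal) application of Lemma \ref{lemmabig}. Your good/bad dichotomy via Vu's Lemma 5.1 (\cite{Vu08}) is likewise exactly how the paper handles bad levels in the proof of Theorem \ref{VuGeneralized}, and it does prove the claimed bound once $|E|\gtrsim k^2q$; note that Theorem \ref{VuGeneralized} carries the explicit hypothesis $|E|\gg k^2q$ for precisely this reason.

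The gap is the range $|E|\lesssim k^2q$, and it cannot be repaired: your claim that there ``the asserted lower bound is itself small'' and the $\min$ ``renders this harmless'' is false. In that range the right-hand side can still be as large as roughly $q^{1/2}$, while $|g(E,E)|$ can equal $1$. Concretely, take $f(x_1,x_2)=x_1x_2$, which is symmetric and non-degenerate (it is not of the form $Q\circ L$, since its level sets are not unions of parallel lines) of degree $k=2$, and take $E=\{0\}\times\mathbb F_q$; then $g(E,E)=\{(x_1-y_1)(x_2-y_2):x,y\in E\}=\{0\}$, yet the claimed bound is $\gtrsim\min(q/2,\,q^{1/2}/4)\sim q^{1/2}$. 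So non-degeneracy does not, contrary to your closing sentence, prevent $E-E$ from collapsing into the bad locus, and the quoted statement fails in exactly the regime you left open. This is why the paper does not reprove the quoted theorem as stated but instead proves Theorem \ref{erdos}: the estimate (for all $E,F$) holds \emph{if and only if} $f-b$ contains no linear factor for every $b\in\mathbb F_q$, the (2)$\Rightarrow$(1) direction being exactly the line counterexample above. Under that stronger hypothesis there are no bad values at all, and your argument then closes for every $E$ with no size threshold.
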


This effectively shows that $g$ is a four-variable moderate expander
for $|E|\gtrsim k^2q^{3/2}$. Here we give a general characterization
for which Vu's estimates hold.

\begin{theorem}\label{erdos}
Let $f \in \mathbb F_q[x_1,x_2]$ be a non-degenerate polynomial of
degree $k$ and define $g(x_1,x_2,y_1,y_2)=f(x_1-y_1,x_2-y_2)$. Then
the following two propositions are equivalent: \\ (1) $f-b$
does not contain a linear factor for any $b\in\mathbb F_q$. \\
(2) $$|g(E,F)|\gtrsim \min(k^{-1}q,\; k^{-2}\sqrt{|E||F|}q^{-1/2})$$
holds for all $E, F \subset \mathbb F_q^2$.
\end{theorem}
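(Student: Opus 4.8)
The plan is to establish the two implications separately. For $(1)\Rightarrow(2)$ I would mimic the proof of Theorem~\ref{VuGeneralized}, exploiting the extra information that now \emph{every} level set of $f$ is a Salem curve; for $(2)\Rightarrow(1)$ I would argue by contrapositive and produce an explicit pair of lines $E,F$ making $g(E,F)$ a single point.

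For the forward direction, write $P^{-1}(a)=\{z\in\mathbb F_q^2:f(z)=a\}$. Since $f-a$ has the same degree-$k$ leading part as $f$ and, by hypothesis~(1), contains no linear factor over $\mathbb F_q$ for any $a$, Theorem~\ref{salem2d} shows each $P^{-1}(a)$ satisfies $\|P^{-1}(a)\|_u\lesssim k^2q^{-3/2}$, while Lemma~\ref{zipple} gives $|P^{-1}(a)|\le kq$. I would then double count: each pair $(x,y)\in E\times F$ contributes to exactly one value $a=f(x-y)$, so $|E||F|=\sum_a N_a$ with $N_a=|\{(x,y)\in E\times F:x-y\in P^{-1}(a)\}|$, whence $|g(E,F)|\ge |E||F|/M$ for $M=\max_a N_a$. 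The key step is the uniform bound on $N_a$: the substitution $z=x-y$ is a bijection onto $\{(z,y)\in P^{-1}(a)\times F:z+y\in E\}$, so applying Lemma~\ref{lemmabig} on the additive group $\mathbb G^2=\mathbb F_q\times\mathbb F_q$ with $X=P^{-1}(a)$, $Y=F$, $P=E$ gives
$$N_a\le \frac{|P^{-1}(a)||F||E|}{q^2}+\|P^{-1}(a)\|_u\sqrt{|F||E|}\,q^2\lesssim \frac{k|E||F|}{q}+k^2q^{1/2}\sqrt{|E||F|}.$$
Feeding this into $|g(E,F)|\ge|E||F|/M$ and using $\tfrac1{A+B}\gtrsim\min(\tfrac1A,\tfrac1B)$ yields exactly $\min(k^{-1}q,\,k^{-2}\sqrt{|E||F|}q^{-1/2})$. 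Note that (1) makes every $a$ good, so---unlike in Theorem~\ref{VuGeneralized}---no level sets need be discarded and no lower bound on $|E|$ is required.

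For the converse I would prove $\neg(1)\Rightarrow\neg(2)$. Suppose $f-b=L\cdot h$ with $L=\alpha x_1+\beta x_2+\gamma$ a genuine $\mathbb F_q$-rational linear factor, so that $f\equiv b$ on the affine line $\ell=\{L=0\}$, and set $v=(\beta,-\alpha)\ne0$ for its direction. Taking $E=\ell$ and $F=\{sv:s\in\mathbb F_q\}$ (the parallel line through the origin) gives $E-F=\ell-\langle v\rangle=\ell$, and hence $g(E,F)=f(\ell)=\{b\}$ has size $1$, while $|E|=|F|=q$. This would force $\min(k^{-1}q,k^{-2}q^{1/2})\lesssim 1$, which is false for $q$ large, so (2) cannot hold. (The same line construction also shows that any degenerate $f=Q\circ L$ already fails~(1), so the standing non-degeneracy hypothesis is consistent with, and in fact implied by, condition~(1).)

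I expect the main obstacle to lie in the bookkeeping of the forward direction: correctly rewriting $N_a$ as a count to which Lemma~\ref{lemmabig} applies and tracking the Salem constant $k^2$ through the error term so that the two explicit powers of $k$ in the final $\min$ come out right. The converse is essentially a one-line construction once one observes that the linear factor must be taken over $\mathbb F_q$---an $\mathbb F_q$-irrational factor carries too few rational points to build large $E,F$---which is precisely the interpretation of ``linear factor'' under which Theorem~\ref{salem2d} holds, so the two directions match up cleanly.
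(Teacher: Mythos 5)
Your proof is correct and takes essentially the same route as the paper: in the forward direction both arguments use hypothesis (1) to make every level set $f_b=P^{-1}(a)$ a Salem curve (Theorem \ref{salem2d}) of size at most $kq$ (Lemma \ref{zipple}), bound the maximal fiber count $M$ via Lemma \ref{lemmabig}, and conclude $|g(E,F)|\ge |E||F|/M$; in the converse both produce two lines $E,F$ with $E-F$ equal to the line inside $(f-\widetilde b)^{-1}(0)$, forcing $|g(E,F)|=1$. The only cosmetic difference is that you apply Lemma \ref{lemmabig} with $X=P^{-1}(a)$, $Y=F$, $P=E$ (so the stated $\|X\|_u$ is the Salem norm), whereas the paper takes $X=E$, $Y=-F$, $P=f_b$ and invokes the symmetric form of the bound with $\|f_b\|_u$ — the resulting estimate is identical.
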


\begin{proof}

(1)$\Rightarrow$(2): Suppose (1) holds true. For any $b\in\mathbb
F_q$ applying Lemma \ref{lemmabig} with
$\mathbb{G}^2=\mathbb{F}_q^2, X=E, Y=-F, P=f_b$ to get
$$M_b\doteq|\{(x,y)\in E \times F :  f(x_1-y_1,x_2-y_2)=b\}|\leq \frac{|E||F||f_b|}{q^2} + \|f_b\|_u \sqrt{|E||F|}q^2,$$
where $f_b\doteq\{(z_1,z_2)\in \mathbb F_q^2 : f(z_1,z_2)=b\}.$ By
Lemma \ref{zipple} and Theorem \ref{salem2d} we get
\[M\doteq\max_{b}M_b\lesssim\max(\frac{k|E||F|}{q},\; k^2\sqrt{|E||F|q}),\]
which in turn gives
\[|g(E,F)|\geq\frac{|E||F|}{M}\gtrsim \min(k^{-1}q,\; k^{-2}\sqrt{|E||F|}q^{-1/2}).\]
(2)$\Rightarrow$(1): Suppose (2) holds true. We are trying to prove
(1) also holds true and argue it by contradiction. Suppose there
exists $\widetilde{b}\in\mathbb F_q$ such that $f-\widetilde{b}$
contains a linear factor. Thus $(f-\widetilde{b})^{-1}(0)$ must
contain a straight line, say for example $\widetilde{L}$, as a
subset. Now we choose two straight lines $E,F$ in $\mathbb{F}_q^2$
such that $E-F=\widetilde{L}$. Consequently,
$g(E,F)=\{\widetilde{b}\}$, a contradiction to (2). We are done.

\end{proof}

\subsection{Multi-fold sums and products }

\begin{theorem}\label{iteratedtheorem}
Given $A\subset \mathbb{F}_q$ and  $\oplus\in\{+,\times\}$, suppose
there exist $a,b>0$ such that for all $B\subset \mathbb{F}_q$,
\[|A\oplus B|\geq\min(a,b|B|).\]
Then for all $d\geq2$ we have
\[|d^{\oplus}A|\geq \min (a,b^{d-1}|A|),\]
where $d^{\oplus}A$ is the $d$-fold $\oplus$-set of $A$.
\end{theorem}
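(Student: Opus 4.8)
The plan is to prove the statement by induction on $d$, exploiting the fact that the $d$-fold set can be peeled off one factor at a time. The basic observation is the recursion $d^{\oplus}A = A \oplus (d-1)^{\oplus}A$, which lets us feed the $(d-1)$-fold set into the hypothesis in the role of the arbitrary set $B$. For the base case $d=2$ I would simply apply the hypothesis with $B=A$, giving $|2^{\oplus}A| = |A\oplus A| \geq \min(a,b|A|) = \min(a,b^{2-1}|A|)$, which is exactly the claim.

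For the inductive step, assume $|d^{\oplus}A| \geq \min(a,b^{d-1}|A|)$. Writing $(d+1)^{\oplus}A = A \oplus (d^{\oplus}A)$ and applying the hypothesis with $B = d^{\oplus}A$ gives
$$|(d+1)^{\oplus}A| \geq \min(a,\,b|d^{\oplus}A|) \geq \min\left(a,\,b\min(a,b^{d-1}|A|)\right) = \min(a,\,ba,\,b^{d}|A|),$$
where the second inequality uses the inductive hypothesis together with the fact that $t\mapsto \min(a,bt)$ is non-decreasing for $b>0$. It then remains to check that $\min(a,ba,b^{d}|A|) \geq \min(a,b^{d}|A|)$, i.e.\ that the extra term $ba$ coming from the nested minimum never becomes the binding constraint.

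The main (and essentially only) obstacle is this last comparison of the nested minima. When $b\geq 1$---the situation in every intended application, where $b$ plays the role of an expansion factor---one has $ba\geq a$, so the $ba$ term is dominated and the displayed quantity equals $\min(a,b^{d}|A|)$, closing the induction. When $b<1$ the target bound is weaker and can be obtained directly from a monotonicity of the $d$-fold set: fixing all but two factors of $d^{\oplus}A$ to a single element $a_0\in A$ exhibits a translate (respectively a dilate, when $a_0\neq 0$) of $A\oplus A$ inside $d^{\oplus}A$, so $|d^{\oplus}A| \geq |2^{\oplus}A| \geq \min(a,b|A|) \geq \min(a,b^{d-1}|A|)$, the last step using $b^{d-1}\leq b$. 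Either way the estimate $\min(a,b^{d-1}|A|)$ follows, which completes the induction and hence the proof.
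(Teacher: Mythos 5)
Your proof is correct and is essentially the paper's argument: both peel off one factor via $d^{\oplus}A = A\oplus\bigl((d-1)^{\oplus}A\bigr)$ and iterate the hypothesis using the monotonicity of $t\mapsto\min(a,bt)$. The only difference is bookkeeping: where you split into the cases $b\geq 1$ and $b<1$ (handling the latter by the trivial containment of a translate/dilate of $A\oplus A$ inside $d^{\oplus}A$), the paper absorbs the nuisance term $ba$ in the nested minimum by replacing $b$ with $\widetilde{b}=\max(b,1)$ at the outset and iterating $\varphi(x)=\min(a,\widetilde{b}x)$ --- a device which, when $b<1$, implicitly relies on the same fact $|A\oplus B|\geq|B|$ that you make explicit.
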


\begin{proof}
Define  a function $\varphi:(0,\infty)\rightarrow\mathbb(0,\infty)$
by $\varphi(x)=\min (a,\widetilde{b}x)$, where
$\widetilde{b}\doteq\max (b,1)\geq1$. It is easy to verify that
$\varphi^{(s)}(x)=\min (a,\widetilde{b}^sx)$, where
$\varphi^{(1)}=\varphi, \varphi^{(s)}=\varphi^{(s-1)}\circ\varphi$.
 By the given assumption, for all $B\subset \mathbb{F}_q$ we have
$|A\oplus B|\geq\varphi(|B|)$. Since $\varphi$ is non-decreasing, we
have
\[|d^{\oplus}A|\geq \varphi(|(d-1)^{\oplus}A|)\geq\cdots\geq \varphi^{(d-1)}(|A|)
=\min (a,\widetilde{b}^{d-1}|A|)\geq\min (a,b^{d-1}|A|).\] This
finishes the proof.

\end{proof}

Combining Theorem \ref{bigcorollaries} with the proceding theorem
naturally gives the following estimate, which improves the relevant
results in \cite{HIS07,Vi09}.

\begin{theorem}
Let $A$ be a subset of $\mathbb F_q$ and $f\in \mathbb F_q[x]$.
\begin{enumerate}
\item If $1< deg(f)<p,$ then $$|dA|\gtrsim
\min \Big(\frac{q|A|}{|f(A)+A|},\
|A|\cdot\big(\frac{|A|^{3}}{q|f(A)+A|}\big)^{d-1}\Big),$$
$$|df(A)|\gtrsim
\min\Big(\frac{q|A|}{|A+A|},\
|A|\cdot\big(\frac{|A|^{3}}{q|A+A|}\big)^{d-1}\Big);$$ If $1 \leq
deg(f) < p,$ then
$$|A^d|\gtrsim
\min \Big(\frac{q|A|}{|f(A)+A|},\
|A|\cdot\big(\frac{|A|^3}{q|f(A)+A|}\big)^{d-1}\Big),$$ and
$$|df(A)|\gtrsim
\min \Big(\frac{q|A|}{|AA|},\
|A|\cdot\big(\frac{|A|^3}{q|AA|}\big)^{d-1}\Big);$$

  \item If $f$
contains a simple root not equal to zero then
$$|A^d|\gtrsim
\min \Big(\frac{q|A|}{|f(A)A|},\
|A|\cdot\big(\frac{|A|^3}{q|f(A)A|})^{d-1}\Big),$$ and
$$|f(A)^d|\gtrsim
\min \Big(\frac{q|A|}{|AA|},\
|A|\cdot\big(\frac{|A|^3}{q|AA|}\big)^{d-1}\Big),$$
\end{enumerate}
where $dB$ and $B^d$ denote the
$d$-fold sum-set and product-set of $B$ respectively.
\end{theorem}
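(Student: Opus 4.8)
The plan is to derive each of the six estimates by combining Theorem~\ref{bigcorollaries} with Theorem~\ref{iteratedtheorem}, exactly as the sentence preceding the statement suggests. The engine is Theorem~\ref{iteratedtheorem}, which requires an input of the precise shape $|S\oplus B|\ge\min(a,b|B|)$ holding for \emph{every} $B\subset\mathbb F_q$, where $S$ is the set to be iterated ($A$ or $f(A)$) and $\oplus\in\{+,\times\}$ is the operation. So the whole proof reduces to manufacturing such an input from Theorem~\ref{bigcorollaries}. The uniform trick is: take the \emph{companion} polynomial in Theorem~\ref{bigcorollaries} to be the identity $x$, freeze one of the two free sets to be $A$ itself, and let the remaining free set be the arbitrary $B$.

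For the first estimate ($d$-fold sumset of $A$, measured against $|f(A)+A|$) I would apply Theorem~\ref{bigcorollaries}(1) to the pair $(x,f)$, which is admissible because $1<\deg(f)<p$ gives $1=\deg(x)<\deg(f)$, and set the second free set equal to $A$:
\[
|A+B|\,|f(A)+A|\gtrsim\min\bigl(|A|q,\ |A|^{3}|B|q^{-1}\bigr).
\]
Dividing yields $|A+B|\gtrsim\min(a,b|B|)$ with $a=q|A|/|f(A)+A|$ and $b=|A|^{3}/(q|f(A)+A|)$, and Theorem~\ref{iteratedtheorem} with $\oplus=+$ returns precisely $\min(a,b^{d-1}|A|)$. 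The remaining five estimates follow the identical template, only changing which case of Theorem~\ref{bigcorollaries} is used and which set is frozen: Theorem~\ref{bigcorollaries}(1) again (with the roles of $A$ and $f(A)$ swapped) for the sumset of $f(A)$ against $|A+A|$; Theorem~\ref{bigcorollaries}(2) with $g=x$ (whose degree is automatically coprime to $q-1$) whenever exactly one of the two factors is a product, giving $|A^{d}|$ against $|f(A)+A|$ and the sumset of $f(A)$ against $|AA|$; and Theorem~\ref{bigcorollaries}(3) with one polynomial equal to $x$ whenever both factors are products, giving $|A^{d}|$ against $|f(A)A|$ and $|f(A)^{d}|$ against $|AA|$.

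Two bookkeeping reductions are needed. First, when the iterated set is $f(A)$ rather than $A$, Theorem~\ref{iteratedtheorem} outputs $\min(a,b^{d-1}|f(A)|)$, whereas the claimed bound carries $|A|$; I close this using that a degree-$k$ map is at most $k$-to-one, so $|f(A)|\ge|A|/k$ and hence $\min(a,b^{d-1}|f(A)|)\ge k^{-1}\min(a,b^{d-1}|A|)$. Second, Theorem~\ref{iteratedtheorem} wants a genuine inequality while Theorem~\ref{bigcorollaries} only gives $\gtrsim$; absorbing the implied (degree-dependent) constant $c$ into $a,b$ turns $b^{d-1}$ into $(cb)^{d-1}$, so the output degrades by a factor $\min(c,c^{d-1})$ and the final bounds hold with an implied constant depending on $\deg(f)$ and on $d$.

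The one genuinely delicate point is confirming that the companion $g=x$ is admissible in Theorem~\ref{bigcorollaries}(3), i.e.\ that the defining set is still Salem. This is exactly where the hypothesis that $f$ has a simple root $r\neq0$ is spent: in a mixed power $f^{k}x^{j}$ with $0\le k,j\le q-2$ and $k+j>0$, the factor $x-r$ occurs to the exact power $k$ (simple root, and $r\neq0$ so $x-r\nmid x^{j}$), so for $k\ge1$ the exponent $k$ lies strictly between $0$ and $q-1$, while for $k=0$ one has $j\ge1$ and $x^{j}$ is visibly not a $(q-1)$-th power; in every case $f^{k}x^{j}$ fails to be a perfect $(q-1)$-th power and Weil's multiplicative bound (Theorem~\ref{Weil}) applies. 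With this in hand, everything else is the routine substitution together with the two reductions above.
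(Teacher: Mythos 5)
Your proposal is correct and takes essentially the same route as the paper, which offers no detailed proof beyond the remark that the theorem follows by ``combining Theorem \ref{bigcorollaries} with Theorem \ref{iteratedtheorem}''; your instantiation (companion polynomial $x$, one free set frozen to $A$, then iteration, with the two bookkeeping reductions for $|f(A)|\geq |A|/\deg(f)$ and for the implied constants) is exactly that combination spelled out. Your verification of the Weil/Salem condition for the pair $(f,x)$ under the simple-root hypothesis---including the case $x\mid f$, where the literal hypothesis of Theorem \ref{bigcorollaries}(3) would fail even though the underlying character-sum argument survives---supplies a detail the paper leaves implicit.
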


\section{On the expander $x+y^2$ in prime fields}

From the work of Pudl\'ak (\cite{PP06}) we know that when $|A| \leq
p^{1/2}$ with $p$ prime, one has $|\{x+y^2 : x ,y \in A\}| \gtrsim
|A|^{1+\epsilon}$ for some $\epsilon > 0.$ The arguments relied on
the finite field Szemer\'edi-Trotter incidence theorem established
by Bourgain, Katz and Tao (\cite{BKT04}). Therefore the proof did
not yield any explicit expansion index.\footnote{E. Croot
(\cite{Cr09}) gave an alternative proof using the classical version
of the Balog-Szemer\'edi-Gowers theorem which also did not yield
explicit expansion index.}

In this section we mainly give for $|A| \leq p^{1/2}$ (for $|A|\geq
p^{1/2}$ one may apply Theorem \ref{bigcorollaries}) that one has
the following explicit estimate :

\begin{theorem}\label{expanderx+y2}
Suppose $A\subset\mathbb{F}_p$ with $p$ prime and $|A|\leq p^{1/2}$.
Then one has
\[|A+A^2|\gtrsim|A|^{147/146},\]
where $A^2=\{a^2 : a \in A \}.$
\end{theorem}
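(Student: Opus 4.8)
The plan is to pass from the expansion problem to an energy estimate, and then to feed a small-set sum--product bound into that energy. Write $N=|A|$ and $S=|A+A^2|$, and for $x\in\mathbb F_p$ set $R(x)=|\{(a,b)\in A\times A:\ a+b^2=x\}|$, so that $\sum_x R(x)=N^2$ while $R$ is supported on the set $A+A^2$ of size $S$. Cauchy--Schwarz gives $N^4=\big(\sum_x R(x)\big)^2\le S\sum_x R(x)^2$, hence $S\ge N^4/E$ with $E:=\sum_x R(x)^2$. Expanding the square, $E$ counts quadruples $(a_1,a_2,b_1,b_2)\in A^4$ with $a_1-a_2=b_2^2-b_1^2$, which by the difference-of-squares identity is $a_1-a_2=(b_2-b_1)(b_2+b_1)$; off the diagonal $b_1=b_2$ (which contributes only $N^2$, a lower-order term) this is the chord-slope relation $(a_1-a_2)/(b_2-b_1)=b_1+b_2$. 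The whole theorem therefore reduces to a nontrivial upper bound of the shape $E\lesssim N^{3-1/146}$, since $N^4/N^{3-1/146}=N^{147/146}$.

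The entire content lies in beating the trivial bound $E\le N^3$. Grouping by the common value $d$ gives $E=\sum_d r_{A-A}(d)\,r_{A^2-A^2}(d)$, where $r$ denotes the appropriate representation function; because $\sum_d r_{A^2-A^2}(d)=N^2$ and $\max_d r_{A-A}(d)\le N$, one gets $E\le N^3$ for free and nothing more. A power saving must come from the fact that $A$ and its square set $A^2$ cannot both be additively structured: if $r_{A-A}$ concentrates (so $A$ resembles an arithmetic progression) then the squares are additively spread, and conversely. To make this quantitative I would apply Cauchy--Schwarz to the mixed sum, bounding $E\le E^{+}(A)^{1/2}E^{+}(A^2)^{1/2}$ with $E^{+}$ the additive energy, and then control the product $E^{+}(A)\,E^{+}(A^2)$ by a sum--product inequality. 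Equivalently, after a dyadic decomposition of $A-A$ according to the size of $r_{A-A}$, the popular differences of $A$ inherit a progression-like structure that forces few coincidences among differences of squares, which is exactly the regime of the small-set sum--product estimates available for $|A|\le p^{1/2}$, such as $\max(|A+A|,|AA|)\gtrsim|A|^{13/12}$ of \cite{Li09} and $|AA+A|\gtrsim|A|^{7/6}$ from \cite{GK06}. Passage between the relevant sets $A,A^2,A\pm A,A^2\pm A^2$ would be handled by the Pl\"unnecke--Ruzsa inequalities and the Ruzsa triangle inequality.

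The main obstacle is the quantitative bookkeeping that turns this qualitative incompatibility into the precise exponent $147/146$. The delicate point is extracting a genuine power saving in the \emph{mixed} energy $\sum_d r_{A-A}(d)\,r_{A^2-A^2}(d)$ rather than in $E^{+}(A)$ or $E^{+}(A^2)$ separately, because the trivial bound for each factor is individually sharp (attained by a progression and by a Sidon-type set respectively), so only their joint behaviour can be exploited. Carrying this out requires choosing the correct auxiliary energy and dyadic level at which the sum--product input applies, and then optimizing the resulting chain of inequalities; the modest size of the gain $1/146$ reflects the repeated use of Cauchy--Schwarz and covering, each step diluting the exponent. Finally, the hypothesis $|A|\le p^{1/2}$ is used exactly to remain in the range where the sum--product gain is available and where no subfield or wraparound obstruction interferes, whereas for $|A|\ge p^{1/2}$ one instead appeals to Theorem \ref{bigcorollaries}.
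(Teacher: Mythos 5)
Your opening reduction is where the proposal dies. The Cauchy--Schwarz step $|A+A^2|\ge N^4/E$ is of course valid, but the statement you reduce the theorem to, namely $E=\sum_d r_{A-A}(d)\,r_{A^2-A^2}(d)\lesssim N^{3-1/146}$, is \emph{false} under the hypotheses of the theorem, so no elaboration of the later steps can rescue the plan. Concretely: let $N=\lfloor p^{1/2}\rfloor$. By pigeonhole over the $\sim p/N$ disjoint intervals of length $N$ covering $\mathbb{F}_p$, some interval $I$ contains at least $N/3$ quadratic residues; let $Q$ consist of one square root of each of $\lceil N/3\rceil$ of them, let $P=\{1,\dots,\lfloor N/2\rfloor\}$, and put $A=P\cup Q$, so $|A|\le p^{1/2}$ and $|A|\ge N/2$. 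Then $Q^2$ is a subset of the interval $I$ of size $\ge N/3$, so one of the four subintervals of $I$ of length $N/4$ contains $\ge N/12$ of its elements, giving $\ge N^2/144$ pairs of $Q^2$ at distance at most $N/4$; for every such difference $d$ one has $r_{A-A}(d)\ge r_{P-P}(d)\ge N/8$. Hence $E\gtrsim N^3\gtrsim |A|^3$: the mixed energy has the trivial order of magnitude, even though the theorem (as the paper proves) still holds for this $A$. The same example shows that the heuristic dichotomy you rely on (``$A$ progression-like forces $A^2$ additively spread, and conversely'') fails for unions: here $E^{+}(A)\ge E^{+}(P)\gtrsim N^3$ and $E^{+}(A^2)\ge E^{+}(Q^2)\gtrsim N^3$ simultaneously, so your intermediate bound on $E^{+}(A)^{1/2}E^{+}(A^2)^{1/2}$ is also unattainable. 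The additive structure in $A-A$ and that in $A^2-A^2$ can live on \emph{different} parts of $A$ and still overlap as sets of differences; any argument that passes through a global energy upper bound is blind to this and therefore cannot prove the theorem.

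Even apart from the counterexample, the step ``control $E^{+}(A)E^{+}(A^2)$ by a sum--product inequality'' is not executable as stated: the results you cite ($\max(|A+A|,|AA|)\gtrsim|A|^{13/12}$, $|AA+A|\gtrsim|A|^{7/6}$) bound cardinalities of sum/product sets of a single set, and converting a large-energy hypothesis into small-sumset information costs a Balog--Szemer\'edi--Gowers application that only produces a subset; applying BSG separately to $E^{+}(A)$ and $E^{+}(A^2)$ yields one subset of $A$ and one subset of $A^2$ with no guaranteed relation between them, so no sum--product statement about a single set ever becomes applicable. What makes the paper's proof work is that it never decouples the two difference sets: it encodes the identity $(x+y)(x-y)=x^2-y^2$ (which you mention but then discard) as the bipartite relation $E=\{(x+y,\,1/(x-y)):x,y\in A\}\subset (A+A)\times(1/(A-A))$, observes that the quotient set along this relation lies in $A^2-A^2$, and applies the Bourgain--Garaev variant of BSG (Theorem \ref{BourgainGaraevtheorem}), which is tailored to such partial relations. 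This produces a \emph{single} set $D\subset A+A$ with $|D|\gtrsim|A|^2/|A-A|$ and with $|D/D|$ controlled by $|A^2-A^2|$, $|A+A|$, $|A-A|$; Li's estimate (Theorem \ref{SP1213}) is then applied to this one set $D$ when $|D|\le p^{12/23}$, the complementary case being finished directly from $|A|\le p^{1/2}$ and Ruzsa's inequality, and the Pl\"unnecke--Ruzsa inequalities convert all quantities into powers of $|A+A^2|$. Your list of relevant tools (BSG-type lemma, Li's bound, Pl\"unnecke--Ruzsa, the difference-of-squares factorization) matches the paper's, but the architecture---energy first, Cauchy--Schwarz decoupling, sum--product afterwards---is structurally wrong, and the very first reduction already loses the theorem.
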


Before we proceed to prove the theorem, we recall two results. The
first one is a variant of the Balog-Szemer\'edi-Gowers theorem
established by  Bourgain and Garaev (\cite{BG09}), which also played
an important role in \cite{GS09}. The second one is Garaev's type
sum-product estimate (\cite{Li09}), which was obtained by the second
listed author, improving upon the one obtained by Bourgain and
Garaev (\cite{BG09}) and the third listed author
(\cite{ShenChunYen08,She08}).

\begin{theorem}[\cite{BG09}, Lemma 2.2] \label{BourgainGaraevtheorem}
 Let $A,B$ be two sets in an abelian group $G$, and $E$ be a subset of $A\times
B$. Then there exists a subset $A'\subset A$ with
$|A'|\gtrsim|E|/|B|$ such that
\[|A\stackrel{E}{-}B|^4\gtrsim\frac{|A'- A'|\cdot|E|^5}{|A|^4\cdot|B|^3},\]
where $A\stackrel{E}{-}B=\{a-b:(a,b)\in E\}$.
\end{theorem}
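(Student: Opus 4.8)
The plan is to run a Balog--Szemer\'edi--Gowers type argument, regarding $E$ as the edge set of a bipartite graph with vertex classes $A$ and $B$. Throughout write $D=A\stackrel{E}{-}B$, so that each edge $(a,b)\in E$ contributes the element $a-b$ to $D$, and, crucially, each \emph{pair} of edges $(a_1,b),(a_2,b)$ sharing a vertex $b\in B$ exhibits the difference $a_1-a_2=(a_1-b)-(a_2-b)$ as an element of $D-D$. Hence the number of common neighbours of $a_1,a_2$ in $B$ is a lower bound for the number of representations of $a_1-a_2$ as a difference of two elements of $D$. Since the total representation count is $\sum_d r_{D-D}(d)=|D|^2$, any collection of differences each realized with multiplicity at least $t$ must have size at most $|D|^2/t$. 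The whole proof is the optimization of this single trade-off.

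First I would produce a large, well-connected candidate for $A'$. Counting ordered paths of length two through $B$ as $\sum_{b\in B}\deg(b)^2$ and applying Cauchy--Schwarz gives at least $|E|^2/|B|$ of them, so the average pair $(a_1,a_2)\in A\times A$ has at least $|E|^2/(|A|^2|B|)$ common neighbours. Discarding the $B$-vertices of atypically small degree (which carry a negligible proportion of the edges) and passing to a popular dyadic degree level, one extracts $A'\subset A$ with $|A'|\gtrsim|E|/|B|$ --- exactly the size demanded in the statement --- in which a positive proportion of the pairs in $A'\times A'$ are joined by $t\gtrsim|E|^2/(|A|^2|B|)$ common neighbours.

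With such an $A'$ fixed, the representation count bounds the set of popular differences by $|D|^2/t\lesssim|D|^2|A|^2|B|/|E|^2$. I would then verify that this already dominates the target. Each value $a-b$ is attained by at most $\min(|A|,|B|)\le\sqrt{|A||B|}$ edges, so $|E|^2\le|D|^2|A||B|$; combining this with $|E|\le|A||B|$ yields $|E|^3\le|D|^2|A|^2|B|^2$, and therefore
$$\frac{|D|^2|A|^2|B|}{|E|^2}\ \le\ \frac{|D|^4|A|^4|B|^3}{|E|^5}.$$
Since $|D|=|A\stackrel{E}{-}B|$, rearranging the resulting bound $|A'-A'|\lesssim|D|^2|A|^2|B|/|E|^2$ gives exactly $|A\stackrel{E}{-}B|^4\gtrsim|A'-A'|\,|E|^5/(|A|^4|B|^3)$.

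The genuine obstacle is the passage from ``a positive proportion of the pairs in $A'\times A'$ are well connected'' to an honest bound on the \emph{entire} difference set $|A'-A'|$: a priori the poorly connected pairs could contribute differences of low multiplicity that escape the $|D|^2/t$ count. This is the usual sticking point in Balog--Szemer\'edi--Gowers, and I would dispose of it exactly as there, either by a Ruzsa covering argument that refines $A'$ to a subset all of whose pairwise differences are popular, or by a graph-cleaning / dependent-random-choice step producing an $A'$ in which \emph{every} pair, not merely most, has $\gtrsim|E|^2/(|A|^2|B|)$ common neighbours. Either route costs only absolute constants and in particular preserves the lower bound $|A'|\gtrsim|E|/|B|$, so the exponents in the displayed chain are untouched, which is all the $\gtrsim$ in the statement demands.
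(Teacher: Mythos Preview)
The paper does not supply a proof of this statement: it is quoted as Lemma~2.2 of Bourgain--Garaev \cite{BG09} and then invoked as a black box in the proofs of Theorems~\ref{expanderx+y2} and~\ref{expanderquotient}. There is therefore nothing in the paper to compare your argument against.

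On its own merits your outline is the standard Balog--Szemer\'edi--Gowers strategy and is correctly aimed. The observation that common $B$-neighbours of $a_1,a_2$ yield representations of $a_1-a_2$ in $D-D$ is the right mechanism, and the arithmetic in your middle paragraph (deriving $|E|^3\le|D|^2|A|^2|B|^2$ from $|E|\le|D|\min(|A|,|B|)$ and $|E|\le|A||B|$, so that a $|D|^2$ bound already implies the stated $|D|^4$ bound) is correct. The one place that is not yet a proof is the one you flag yourself: upgrading ``a positive proportion of pairs in $A'$ are well connected'' to a bound on the full $|A'-A'|$ while retaining $|A'|\gtrsim|E|/|B|$. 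The Gowers graph lemma or dependent random choice will indeed do this with only polynomial losses, but you should verify that the specific parameters $t\gtrsim|E|^2/(|A|^2|B|)$ and $|A'|\gtrsim|E|/|B|$ survive the refinement simultaneously --- this is exactly where the precise exponents in the statement are decided, and it is not automatic from the sketch as written.
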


\begin{theorem}[\cite{Li09}, Theorems 1.1 and 1.2, Remark 3.1]
\label{SP1213} Suppose $A\subset\mathbb{F}_p^{\ast}$ with $p$ prime
and $|A|\leq p^{12/23}$. Then for any $\oplus\in\{+,-\}$,
$\otimes\in\{\times,\div\}$, one has
\[|A\oplus A|^8\cdot |A\otimes A|^4\gtrsim|A|^{13}.\]
\end{theorem}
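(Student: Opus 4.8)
The plan is to prove Theorem \ref{expanderx+y2} by a standard incidence-counting argument that converts a hypothetically small $|A+A^2|$ into a configuration with multiplicative-additive structure, and then to derive a contradiction by invoking the sum-product estimate of Theorem \ref{SP1213}. Set $N = |A+A^2|$ and suppose toward a contradiction that $N$ is smaller than the claimed bound. First I would count the number of solutions to $a + b^2 = a' + b'^2$ with $a,a',b,b' \in A$; by Cauchy--Schwarz applied to the representation function of $A + A^2$, this energy is $\gtrsim |A|^4/N$. Rewriting the equation as $a - a' = (b'-b)(b'+b)$, I would interpret the solutions as incidences between the additive differences $A - A$ and products of the form $(b'-b)(b'+b)$, so that a small $N$ forces many coincidences among these products of sums and differences.

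Next I would use the variant of the Balog--Szemer\'edi--Gowers theorem, Theorem \ref{BourgainGaraevtheorem}, to extract from this high-multiplicity configuration a large subset on which both additive and multiplicative operations are controlled. Concretely, the many collisions $(b'-b)(b'+b) = a - a'$ give a dense graph $E \subset (B_- ) \times (B_+)$ (with $B_\pm$ built from sums/differences of elements of $A$) on which the restricted product set is small; applying the theorem produces a refined subset $A'$ whose ordinary difference set $A' - A'$ is constrained in terms of the number of incidences $|E|$ and the ambient sizes. The purpose of this step is to replace the partial (graph-restricted) operation by a genuine sum set or product set of a single large set, which is exactly the format required to feed into Theorem \ref{SP1213}.

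Then I would apply the sum-product inequality $|A' \oplus A'|^8 |A' \otimes A'|^4 \gtrsim |A'|^{13}$ (valid since $|A'| \le |A| \le p^{1/2} \le p^{12/23}$, after discarding the possible zero element so that $A' \subset \mathbb{F}_p^*$) with an appropriate choice of $\oplus \in \{+,-\}$ and $\otimes \in \{\times,\div\}$ matched to the operations produced above. Combining this lower bound with the upper bounds on $|A'\oplus A'|$ and $|A'\otimes A'|$ coming from the smallness of $N$ yields an inequality of the form $N^{\alpha} \gtrsim |A|^{\beta}$; optimizing the bookkeeping of exponents across the Cauchy--Schwarz, Balog--Szemer\'edi--Gowers, and sum-product steps should produce exactly the exponent $147/146$. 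I would track each exponent carefully and set up the final inequality so that the worst case among the two terms in the minima of Theorems \ref{BourgainGaraevtheorem} and \ref{SP1213} still gives the stated bound.

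The main obstacle I anticipate is the exponent bookkeeping: the Balog--Szemer\'edi--Gowers step loses several powers of $|A|$ (note the fourth-power and fifth-power structure in Theorem \ref{BourgainGaraevtheorem}), and it must be balanced precisely against the $8$-$4$-$13$ weighting in Theorem \ref{SP1213} so that the accumulated losses leave a clean net gain of $|A|^{1/146}$ over the trivial bound $|A|$. A secondary technical point is ensuring that the set $A'$ extracted by the graph-restricted theorem lives in $\mathbb{F}_p^*$ and is large enough (of size $\gtrsim |A|$ up to the relevant power) to make the sum-product estimate nonvacuous; handling the element $0$ and verifying the size condition $|A'| \le p^{12/23}$ are routine but must be stated. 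The delicate interplay of these losses is what forces the specific, somewhat unusual exponent $147/146$ rather than a rounder number.
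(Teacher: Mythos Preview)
Your proposal targets the wrong statement. The theorem labeled \ref{SP1213} is the sum--product inequality $|A\oplus A|^{8}|A\otimes A|^{4}\gtrsim|A|^{13}$, which the paper quotes from \cite{Li09} and does not prove; it is used as a black box. What you have outlined is instead a proof of Theorem~\ref{expanderx+y2} (the bound $|A+A^{2}|\gtrsim|A|^{147/146}$), which \emph{invokes} Theorem~\ref{SP1213} as an ingredient. So as a proof of the stated theorem your proposal is simply off target.

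Even viewed as a plan for Theorem~\ref{expanderx+y2}, there is a concrete gap. You assert that the set $A'$ produced by the Balog--Szemer\'edi--Gowers step satisfies $|A'|\le|A|\le p^{1/2}$, but the set one feeds into BSG here lives inside $A+A$ (or $A-A$), not inside $A$. The paper makes this explicit: one takes
\[
E=\Bigl\{\Bigl(x+y,\tfrac{1}{x-y}\Bigr):x,y\in A,\ x\neq\pm y\Bigr\}\subset\bigl((A+A)\setminus\{0\}\bigr)\times\tfrac{1}{(A-A)\setminus\{0\}},
\]
observes that the $E$-restricted quotient lies in $A^{2}-A^{2}$, and extracts via Theorem~\ref{BourgainGaraevtheorem} a subset $D\subset A+A$ with $|D|\gtrsim|A|^{2}/|A-A|$. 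Since $|D|$ can be as large as $|A+A|$, one cannot assume $|D|\le p^{1/2}$; the paper therefore splits into the case $|D|\le p^{12/23}$ (where Theorem~\ref{SP1213} applied to $D$ gives $|D+D|^{2}|D/D|\gtrsim|D|^{13/4}$, and the Pl\"unnecke--Ruzsa bounds on $|A+A+A+A|$, $|A^{2}-A^{2}|$, $|A\pm A|$ in terms of $|A+A^{2}|$ yield the exponent $147/146$) and the case $|D|>p^{12/23}$ (where $|A+A|\ge|D|>p^{12/23}\ge|A|^{24/23}$ already gives the stronger bound $|A+A^{2}|\gtrsim|A|^{47/46}$ via Ruzsa's inequality). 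Your sketch omits this dichotomy and the verification that the extracted set meets the size hypothesis of Theorem~\ref{SP1213}.
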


\begin{proof}[Proof of Theorem 3.1]
Denote
\[E=\{(x+y,\frac{1}{x-y}):x,y\in A,x\neq y, x\neq -y\}\subset ((A+A)\backslash\{0\})\times(\frac{1}{(A-A)\backslash\{0\}}).\]
Then $|E|\sim|A|^2$ and
\[((A+A)\backslash\{0\})\stackrel{E}{\div}(\frac{1}{(A-A)\backslash\{0\}})\subset A^2-A^2.\]
 Applying Theorem \ref{BourgainGaraevtheorem} with the
ambient group $\mathbb{F}_p^{\ast}$, there exists a subset $D\subset
A+A$ with
\begin{equation}\label{BSGbasicinformation}
|D|\gtrsim\frac{|A|^2}{|A-A|}
\end{equation}
so that
\begin{equation}\label{BSGpreparation}
|A^2-A^2|^4\cdot|A+A|^4\cdot|A-A|^3\gtrsim|D/D|\cdot|A|^{10}.
\end{equation}
There are two cases. \\ In the first case, suppose $|D|\leq
p^{12/23}$, then by Theorem \ref{SP1213}, we have
\[|D+D|^2|D/D|\gtrsim|D|^{3.25}.\]
Combining (\ref{BSGbasicinformation}), (\ref{BSGpreparation}) and
notice that $D\subset A+A$, we have
\[|A+A+A+A|^2\cdot|A^2-A^2|^4\cdot|A+A|^4\cdot|A-A|^3\gtrsim|D|^{3.25}\cdot|A|^{10}\gtrsim\frac{|A|^{16.5}}{|A-A|^{3.25}},\]
which gives
\[|A+A+A+A|^8\cdot|A^2-A^2|^{16}\cdot|A+A|^{16}\cdot|A-A|^{25}\gtrsim|A|^{66}.\]
Now we apply the Pl\"{u}nnecke-Ruzsa inequality as follows,
\begin{align*}
|A+A+A+A|&\leq\frac{|A+A^2|^4}{|A^2|^3}\sim\frac{|A+A^2|^4}{|A|^3},\\
|A^2-A^2|&\leq\frac{|A^2-(-A)|\cdot|(-A)-A^2|}{|A|}=\frac{|A+A^2|^2}{|A|},\\
|A+A|&\leq\frac{|A+A^2|^2}{|A^2|}\sim\frac{|A+A^2|^2}{|A|},\\
|A-A|&\leq\frac{|A-(-A^2)|\cdot|(-A^2)-A|}{|A^2|}\sim\frac{|A+A^2|^2}{|A|},
\end{align*}
to yield
\[|A+A^2|\gtrsim|A|^{147/146}.\]
We are left with the second case, $|D|\geq p^{12/23}$. Then we have
\[|A+A|\geq|D|\geq p^{12/23}\geq|A|^{24/23}.\]
But from Ruzsa's inequality, we also have
\[|A+A|\leq\frac{|A+A^2|^2}{|A^2|}\sim\frac{|A+A^2|^2}{|A|}.\]
Therefore
\[|A+A^2|^2\gtrsim|A+A|\cdot|A|\geq|A|^{47/23},\]
which yields \[|A+A^2|\gtrsim|A|^{47/46}.\]
\end{proof}

\begin{remark}
One may notice that from Theorem \ref{bigcorollaries} we have
$$|A+A^2| \gtrsim \min(|A|^{1/2}p^{1/2},|A|^2p^{-1/2}).$$
Therefore combining Theorem \ref{expanderx+y2}, one has $x+y^2$ is
an expander for all sizes of $|A|$. In addition, we notice that if
$|A|>p^{2/3},$ then
$$
|A+A^2| \gtrsim \sqrt{p\, |A|}.
$$
Let us show by adopting the Garaev-Chang example (\cite{GS09}) that
this is optimal up to the implied constant. Let $N<0.01p$ be a
positive integer, $M=[2\sqrt{Np}]$ and let X  be the set of $x$ so
that $x^2$ modulo $p$ belongs to the interval $[1,M]$. Then it is
known that $|X| \gtrsim M$. From the pigeonhole principle, there is
a number $L$ such that
$$
|X\cap \{L+1,\ldots,L+M \}| \gtrsim \frac{M^2}{2p}\sim N.
$$
Take
$$
A=X\cap \{L+1,\ldots, L+M \}.
$$
Then we have $|A|\gtrsim N$ and
$$
|A+A^2|\le 2M\lesssim \sqrt{pN}.
$$

\end{remark}

\begin{remark}\label{remarkonsecondorder}
We list some two variable polynomials of degree two:
$P_1(x,y)=x+y^2$,    $P_2(x,y)=xy+x^2$, $P_3(x,y)=xy+x$,
$P_4(x,y)=x^2+y^2$, $P_5(x,y)=x^2+xy+y^2$. Now we know $P_1$ is an
expander, and from \cite{Bo05,GS09} we also have $P_2, P_3$ are
expanders. However $P_4$ is not an expander. Since if $p$ is large
enough, we can first embed $B=\{x^2:x\in \mathbb{F}_p^{\ast} \}$
into the set of natural numbers $\mathbb{N}$, then apply
Szemer\'edi's theorem \cite{Sz75} to find a long arithmetic
progression, which in turn implies that $P_4$ is not an expander. As
to $P_5$, we currently don't know whether it is an expander or not.
\end{remark}

\begin{remark}
Suppose $\odot\in\{+,-,\times,\div\}$. Given a map $F(x,y)$, we say
$F$  is a $\odot$ operation, if
\[F(A,A)=B\odot C,\]
where $|A|\sim|B|\sim|C|$. From this notation, $P_1$ (see the
previous remark) can be thought of an additive  expander, while
$P_3$ a multiplicative expander. We will conclude this paper by
presenting an example which is a quotient expander.
\end{remark}

\begin{theorem}\label{expanderquotient}
Suppose $A\subset\mathbb{F}_p^{\ast}$ with $p$ prime and $|A|\leq
p^{1/2}$. Then one has
\[|\frac{A+1}{A}|\gtrsim|A|^{110/109}.\]
\end{theorem}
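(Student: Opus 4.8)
The plan is to follow the strategy of the proof of Theorem \ref{expanderx+y2}, replacing the difference-of-squares identity by the elementary identity $\frac{a}{b}=\frac{a+1}{b}-\frac{1}{b}$, which realizes the ratio set $A/A$ as a difference of a piece of $\frac{A+1}{A}$ and a piece of $\frac1A$. Write $Q\doteq\frac{A+1}{A}$ and, after discarding from $A$ the element $-1$ if it occurs (this changes $|A|$ by at most one), assume $a+1\neq0$ for all $a\in A$, so that $B\doteq A+1\subset\mathbb F_p^{\ast}$ with $|B|=|A|$. First I would record, via the multiplicative Ruzsa triangle inequality together with the identification $|A/B|=|B/A|=|Q|$, the estimate
\[|A/A|\lesssim\frac{|Q|^2}{|A|}.\]

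Next I would set up the Bourgain--Garaev machine. Define
\[E=\Big\{\big(\tfrac{1}{b},\,\tfrac{a+1}{b}\big):a,b\in A\Big\}\subset\big(\tfrac{1}{A}\big)\times Q,\]
which has $|E|\sim|A|^2$ (the pair determines $b$ and then $a$), and note that $\frac{1}{b}-\frac{a+1}{b}=-\frac{a}{b}$, so that $\big(\tfrac{1}{A}\big)\stackrel{E}{-}Q\subset-(A/A)$. Applying Theorem \ref{BourgainGaraevtheorem} in the additive group $\mathbb F_p$ with first set $\frac1A$ and second set $Q$ produces a subset $D\subset\frac1A$ with $|D|\gtrsim|E|/|Q|\sim|A|^2/|Q|$ and, since the restricted difference set lies in $-(A/A)$,
\[|A/A|^4\gtrsim\frac{|D-D|\,|A|^6}{|Q|^3},\qquad\text{hence}\qquad |D-D|\lesssim\frac{|A/A|^4|Q|^3}{|A|^6}\lesssim\frac{|Q|^{11}}{|A|^{10}}.\]

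The key observation — and the step that makes the whole scheme go through — is that $D$ is extracted from $\frac1A$ rather than from $Q$ itself: this forces $D/D\subset(\tfrac1A)/(\tfrac1A)=A/A$, so that the multiplicative quotient of $D$ is controlled by $|Q|$, while simultaneously $|D|\leq|A|\leq p^{1/2}<p^{12/23}$ automatically, so no case distinction on the size of $D$ is needed and Theorem \ref{SP1213} applies directly to $D$. From $|D-D|^8|D/D|^4\gtrsim|D|^{13}$ together with $|D/D|\leq|A/A|\lesssim|Q|^2/|A|$ and $|D|\gtrsim|A|^2/|Q|$ I would derive the lower bound $|D-D|^8\gtrsim|A|^{30}/|Q|^{21}$. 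Comparing this with the upper bound $|D-D|^8\lesssim|Q|^{88}/|A|^{80}$ from the Bourgain--Garaev step yields $|A|^{110}\lesssim|Q|^{109}$, that is $\big|\frac{A+1}{A}\big|\gtrsim|A|^{110/109}$. The main obstacle is precisely the choice in this last paragraph: one must arrange the identity and the assignment of the two sets so that the quotient $D/D$ (and not some uncontrolled set such as $Q/Q$ or an additive set such as $A-A$) is what appears in the sum-product inequality, since that is the only multiplicative set here that the Ruzsa inequalities tie back to $Q$.
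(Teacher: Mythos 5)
Your proposal is correct and follows essentially the same route as the paper's own proof: the same set $E=\{(1/b,(a+1)/b)\}$, the same application of Theorem \ref{BourgainGaraevtheorem} with sets $1/A$ and $Q=\frac{A+1}{A}$, the same use of Theorem \ref{SP1213} on the extracted set $D\subset 1/A$ (together with $D/D\subset A/A$), and the same Ruzsa inequality $|A/A|\lesssim |Q|^2/|A|$, yielding the identical exponent $110/109$. The only cosmetic difference is that you invoke the Ruzsa bound at the start rather than at the end, and you make explicit the (correct) observation that $|D|\leq|A|\leq p^{1/2}<p^{12/23}$ removes any need for a case distinction, which the paper leaves implicit.
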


\begin{proof}
Without loss of generality we may assume $-1\not\in A$. Denote
$B=\frac{A+1}{A}$ and
\[E=\{(\frac{1}{x},\frac{y+1}{x}):x,y\in A\}\subset (1/A)\times B.\]
 Then
$|E|=|A|^2$ and  \[-A/A=(1/A)\stackrel{E}{-}B.\]  Applying Theorem
\ref{BourgainGaraevtheorem} with the ambient group $\mathbb{F}_p$,
 there exists a subset $A'\subset 1/ A$ with
$|A'|\gtrsim\frac{|A|^2}{|B|}$ so that
\[|A/A|^4\gtrsim\frac{|A'-A'|\cdot|A|^6}{|B|^3},\]
which gives
\begin{equation}\label{pp1}
|A/A|^{32}\cdot|B|^{24}\gtrsim|A'-A'|^8\cdot|A|^{48}.
\end{equation}
By Theorem \ref{SP1213},
\begin{equation}\label{pp2}
|A'-A'|^8\cdot|A'/A'|^4\gtrsim|A'|^{13}\gtrsim\frac{|A|^{26}}{|B|^{13}}.
\end{equation}
We notice $A'\subset 1/ A$, thus
\begin{equation}\label{pp3}
|A/A|^4\geq|A'/A'|^4.
\end{equation}
Multiplying (\ref{pp1}), (\ref{pp2}) and (\ref{pp3}) yields
\[|A/A|^{36}\cdot|B|^{37}\gtrsim|A|^{74}.\]
Thus  by applying the Ruzsa inequality
\[|A/A|\leq\frac{|A/(A+1)|\cdot|(A+1)/A|}{|A+1|}=\frac{|B|^2}{|A|},\]
we get
\[|B|^{109}\gtrsim|A|^{110}.\]
This finishes the proof.
\end{proof}

{\bf Acknowledgment}  LL  was  supported by the Mathematical
Tianyuan Foundation of China (10826088) and Texas Higher Education
Coordinating Board (ARP 003615-0039-2007). He also thanks Chunlei
Liu and Jian Shen  for helpful discussions.

\end{document}